\newcommand{\bbV}{{\mathcal V}}
\newcommand{\bbE}{{\mathcal E}}
\newtheorem{obs}{Observation}[section]
\newtheorem{definition}{Definition}
\begin{document}

\title[Minimum-Weight Edge Discriminator in Hypergraphs]{Minimum-Weight Edge Discriminator in Hypergraphs}
\author{Bhaswar B. Bhattacharya}
\address{Department of Statistics, 
Stanford University, California, USA}
\email{bhaswar@stanford.edu}
\author{Sayantan Das}
\address{Department of Biostatistics, School of Public Health, 
University of Michigan, Ann Arbor, USA}
\email{sayantan@umich.edu}
\author{Shirshendu Ganguly}
\address{Department of Mathematics, 
University of Washington, Seattle, USA}
\email{sganguly@math.washington.edu}

\begin{abstract}
In this paper we introduce the concept of minimum-weight edge-discriminators in hypergraphs, and study its various properties. For a hypergraph $\mathcal H=(\mathcal V, \mathcal E)$, a function $\lambda: \mathcal V\rightarrow \mathbb Z^{+}\cup\{0\}$ is said to be an {\it edge-discriminator} on $\mathcal H$ if $\sum_{v\in E_i}{\lambda(v)}>0$, for all hyperedges $E_i\in \mathcal E$, and $\sum_{v\in E_i}{\lambda(v)}\ne \sum_{v\in E_j}{\lambda(v)}$, for every two distinct hyperedges $E_i, E_j \in \mathcal E$. An {\it optimal edge-discriminator} on $\mathcal H$, to be denoted by $\lambda_\mathcal H$, is an edge-discriminator on $\mathcal H$ satisfying $\sum_{v\in \mathcal V}\lambda_\mathcal H (v)=\min_\lambda\sum_{v\in \mathcal V}{\lambda(v)}$, where the minimum is taken over all edge-discriminators on $\mathcal H$.  We prove that any hypergraph $\mathcal H=(\mathcal V, \mathcal E)$,  with $|\mathcal E|=n$, satisfies $\sum_{v\in \mathcal V} \lambda_\mathcal H(v)\leq n(n+1)/2$, and equality holds if and only if the elements of $\mathcal E$ are mutually disjoint. For $r$-uniform hypergraphs $\mathcal H=(\mathcal V, \mathcal E)$, it follows from results on Sidon sequences that $\sum_{v\in \mathcal V}\lambda_{\mathcal H}(v)\leq |\mathcal V|^{r+1}+o(|\mathcal V|^{r+1})$, and the bound is attained up to a constant factor by the complete $r$-uniform hypergraph.  Next, we construct optimal edge-discriminators for some special hypergraphs, which include paths, cycles, and complete $r$-partite hypergraphs. Finally, we show that no optimal edge-discriminator on any hypergraph $\mathcal H=(\mathcal V, \mathcal E)$, with $|\mathcal E|=n~(\geq 3)$, satisfies $\sum_{v\in \mathcal V} \lambda_\mathcal H (v)=n(n+1)/2-1$. This shows that not all integer values between $n$ and $n(n+1)/2$ can be the weight of an optimal edge-discriminator of a hypergraph, which, in turn, raises many other interesting combinatorial questions.
\end{abstract}

\subjclass[2010]{05C65,  05C78,  90C27}
\keywords{Combinatorial optimization, Graph labeling, Hypergraphs, Irregular networks.}

\maketitle

\section{Introduction}

A {\it hypergraph} is a pair $\mathcal H=(\mathcal V, \mathcal E)$ where $\mathcal V$ is a finite set and $\mathcal E$ is a collection of subsets of $\mathcal V$. The elements of $\mathcal V$ are called {\it vertices} and the elements of $\mathcal E$ are called {\it hyperedges}. A vertex labeling of a hypergraph is a function from the vertex set $\mathcal V$ to the set of non-negative integers. In this paper, we introduce the notion of edge-discriminating vertex labelings in hypergraphs. A labeling $\lambda: \mathcal V\rightarrow \mathbb Z^{+}\cup\{0\}$ is said to be an {\it edge-discriminator} on $\mathcal H$ if $\sum_{v\in E_i}{\lambda(v)}>0$, for all hyperedges $E_i\in \mathcal E$, and $\sum_{v\in E_i}{\lambda(v)}\ne \sum_{v\in E_j}{\lambda(v)}$, for every two distinct hyperedges $E_i, E_j \in \mathcal E$. For any edge-discriminator $\lambda$ on $\mathcal H$, the value of the sum $\sum_{v\in \mathcal V}\lambda(v)$ will be called the {\it weight} of the edge-discriminator and shall be denoted by $\omega_\lambda(\mathcal V)$. An edge-discriminator $\lambda_\mathcal H$ on $\mathcal H$ is said to be an {\it optimal edge-discriminator} if it has the least weight, that is, if $\omega_{\lambda_\mathcal H}(\mathcal V)=\min_\lambda\omega_\lambda(\mathcal V)$, where the minimum is taken over all edge-discriminators on $\mathcal H$. Henceforth, the weight of the optimal edge-discriminator on $\mathcal H$, that is, $\omega_{\lambda_\mathcal H}(\mathcal V)$, will be denoted by $\omega_0(\mathcal H)$.

In this paper we prove several properties of hypergraph edge-discriminators and explicitly compute optimal edge-discriminators in some special types of hypergraphs.

\subsection{Related Works}

Hypergraph vertex labelings such that the sum of the labels of the vertices along the edges are mutually distinct, has been studied in the literature in many different contexts. One of them is the notion of anti-magic labeling on graphs. In general, graph labeling is an assignment of integers to the vertices or edges, or both, of a graph which satisfy certain conditions. Refer to the survey of Gallian \cite{graphlabelingsurvey} for a comprehensive view into the colossal literature in graph labeling. For a graph $G=(V, E)$ an {\it edge-antimagic vertex labeling} $l:V\rightarrow \mathbb Z^+$ is a injective function such that the quantities $l(x)+l(y)$ are mutually distinct, whenever $(x, y)$ is an edge in $G$. Edge-antimagic vertex labeling was studied by Wood \cite{wood}. Later Bollob\'as and Pikhurko \cite{bollobaspikhurko} defined the {\it sum magic number} of a graph $G=(V(G), E(G))$, denoted by $\mathcal S(G)$, as the smallest value of the largest vertex label in an edge-antimagic vertex labeling. They proved that $\mathcal S(K_n)=(1+o(1))n^2$ and $\mathcal S(n, m)< (1-c)n^2$ whenever $m \leq c n^2$, where $\mathcal S(n, m)=\max\{\mathcal S(G): |V(G)|=n, |E(G)|=m\}$. 
Note that, unlike in the case of edge-discriminators, edge anti-magic vertex labeling on a graph is an injective function. Moreover, the sum magic number minimizes the maximum label, and not the the sum of the labels as required in the optimal edge-discriminator.

Another relevant line of research involves the notion of irregular networks. A {\it network} is a simple graph where each edge is assigned a positive integer weight. The {\it degree} of a vertex in a network is the sum of the weights of its incident edges. A network is {\it irregular} if all the vertices have distinct degrees. The strength of a network is the maximum weight assigned to any edge. The irregularity strength of a graph $G$ is the minimum strength among all irregular network on $G$, and is denoted by $s(G)$. The notion of irregularity strength was first introduced by Chartrand et al. \cite{chartrand}, where it was shown that for any given graph $G$, $$s(G)\geq \lambda(G) = \max_{i\leq j}\frac{(n_i + n_{i+1} + \ldots + n_{j}) + i - 1}{j},$$
where $n_i$ denotes the number of vertices of degree $i$. If $G$ contains a $K_2$ or multiple isolated vertices, the irregularity strength $s(G)=\infty$. Nierhoff \cite{nierhoff} proved a tight bound $s(G)\leq m-1$, for graphs $G$ with $|V(G)|=m~(>3)$ and $s(G) < \infty$. Faudree and Lehel \cite{faudreelehel} proved bounds on the irregularity strength of $d$-regular graphs. Upper bounds on the irregularity strength for general graphs in terms of the minimum degree was first given by Frieze et al. \cite{frieze}, and later by Przyby\l o \cite{przybylo,przybylosiam} and Kalkowski et al. \cite{pfender}. However, computing the irregularity strength of a graph exactly is difficult in general. It is known only for some very special graphs and in almost all of these cases, it is found to be within an additive constant of $\lambda(G)$. It was conjectured by Lehel \cite{lehel}, that $s(G)$ is within an additive constant of $\lambda(G)$ for connected graphs. This has been verified for some special families of graphs like, complete graphs \cite{chartrand}, cycles, most complete bipartite graphs, Turan graphs \cite{irregular}, wheels, hypercubes, and grids \cite{ebert}. The problem of studying the irregularity strength was extended for hypergraphs by Gy\'arf\'as et al. \cite{jcmcc}.

Edge-discriminators on hypergraphs and irregular networks are dual concepts. For a hypergraph $\mathcal H=(\mathcal V, \mathcal E)$, the {\it dual hypergraph} is defined as $\mathcal D(\mathcal H)=(\mathcal W, \mathcal F)$, where $\mathcal W=\mathcal E$ and $\mathcal F=\{\mathcal E(v)|v\in V\}$, where $\mathcal E(v)$ is the set of all edges in $\mathcal E$ which are incident on $v\in \mathcal V$. Note that if $\kappa: \mathcal E\rightarrow \mathbb Z^+$ is an irregular edge assignment for $\mathcal H$, then $\kappa$ transforms to an edge-discriminator $\lambda: \mathcal W\rightarrow \mathbb Z^+$ on $\mathcal D(\mathcal H)$ as follows: For every vertex $v\in \mathcal W$, let $e_v\in \mathcal E$ be the corresponding hyperedge in $\mathcal H$ and define $\lambda(v)=\kappa(e_v)$. However, the most important difference between irregularity strength of a hyper graph and the optimal edge-discriminator in the dual hypergraph is the optimization criterion. In the case of irregularity strength the maximum label is minimized, whereas we minimize the sum of the labels in the optimal edge-discriminator. Another difference is that in an irregular network, the value assigned to an edge is always positive, which means that the edge-discriminator in the corresponding hypergraph assigns a positive weight to every vertex, which is not required in the definition of an edge-discriminator.

Another related line of work exists in the context of the  power set hypergraph. The {\it power set hypergraph} on a set $\mathcal V$, with $|\mathcal V|=m$, is the hypergraph $(\mathcal V, 2^\mathcal V)$, where $2^\mathcal V$ denotes the set of all non-empty subsets of $\mathcal V$. Note that any edge-discriminator on the power set hypergraph is a set of positive integers such that all its non-empty subsets have distinct sums. A set of positive numbers satisfying this property is called {\it sum-distinct}. A sum-distinct set of $m$ elements with the minimum total sum is the optimal edge-discriminator on the power set hypergraph. This can be easily computed as we shall show in Section \ref{sec:power_set}. In 1931 Erd\H{o}s asked for estimates of smallest possible value of the largest element in a sum-distinct set of $m$ elements, which we denote by $w(m)$. Erd\H{o}s offered 500 dollars for verifying whether $w(m)=\Omega(2^m)$, and Guy \cite{guy} made the stronger conjecture that $w(m) > 2^{m-3}$. In 1955 Erd\H{o}s and Moser proved that $w(m) \geq 2^m/(4\sqrt m)$ \cite{erdos_sum_distinct}. The constant was later improved by Elkies \cite{elkies}, which was further improved by Aliev \cite{aliev}. A set consisting of the first $m$ powers of 2 has distinct subset sums, and has maximal element $2^{m-1} $, which implies that $w(m) \leq 2^{m-1} $. Conway and Guy \cite{conwayguy} found a construction of sum-distinct sets which gave an interesting upper bound on $w(m)$. This was later improved by Lunnon \cite{lunnon} and then by Bohman \cite{bohman}, who showed that $w(m) < 0.22002 \cdot 2^m$ for sufficiently large $m$.

\subsection{Our Results}

In this paper we study edge-discriminators on hypergraphs such that the sum of the labels of the vertices is minimized. We begin by proving a general upper bound on the weight of an edge-discriminator, which holds for any hypergraph. The bound is relatively simple to obtain, but it is tight.

\begin{thm}
For any hypergraph $\mathcal H=(\mathcal V, \mathcal E)$, with $|\mathcal E|=n$, $\omega_0(\mathcal H)\leq n(n+1)/2$, and equality holds if and only if the elements of $\mathcal E$ are mutually disjoint. 
\label{th:hypergraph_main}
\end{thm}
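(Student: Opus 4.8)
The proof naturally splits into the inequality $\omega_0(\mathcal H)\le n(n+1)/2$ and the two directions of the equality characterization. Throughout I would write $S_i:=\sum_{v\in E_i}\lambda(v)$ for the sum along $E_i$ and $d(v):=|\{i:v\in E_i\}|$ for the number of hyperedges through $v$, and I would record at the outset the double-counting identity $\sum_{i=1}^n S_i=\sum_{v\in\mathcal V}d(v)\lambda(v)$. This identity is what drives the equality analysis, while the inequality is best attacked by an explicit greedy construction.

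For the upper bound, the plan is to fix an ordering $E_1,\dots,E_n$ of the hyperedges and build $\lambda$ by processing them in order, maintaining the invariant that the already-processed edges have distinct positive sums. When $E_k$ is added, its current sum $c=\sum_{v\in E_k}\lambda(v)$ is inspected: if $c>0$ and $c\notin\{S_1,\dots,S_{k-1}\}$ no change is made and the step is free; otherwise I increment the label of a suitable vertex of $E_k$ by some $t\ge 1$ so that $S_k$ lands in a free positive slot. Since among the $k$ consecutive candidate values $c+1,\dots,c+k$ at most $k-1$ are blocked by the earlier sums, a valid $t\le k$ exists, and summing the per-step cost yields $\omega_\lambda(\mathcal V)\le\sum_{k=1}^{n}k=n(n+1)/2$. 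The \emph{main obstacle} is precisely the case $E_k\subseteq E_1\cup\dots\cup E_{k-1}$, where $E_k$ has no vertex private to it, so any increment also perturbs some earlier sum; I would handle this by incrementing a vertex of $E_k$ lying outside the earlier edge(s) currently colliding with $E_k$ and choosing the shift $t$ to dodge the finitely many values that would create a new coincidence, the delicate point being to show this can be done within the step budget (for instance by ordering the covered edges to be resolved last, or by a local exchange argument).

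For the equality characterization, the ``if'' direction is the clean half. Assuming the elements of $\mathcal E$ are mutually disjoint we have $d(v)\le 1$ for every $v$, so the identity gives $\sum_{i=1}^n S_i=\sum_{v:\,d(v)=1}\lambda(v)\le\sum_{v\in\mathcal V}\lambda(v)=\omega_\lambda(\mathcal V)$. Since the $S_i$ are $n$ distinct positive integers, $\sum_i S_i\ge 1+2+\dots+n=n(n+1)/2$, whence \emph{every} edge-discriminator satisfies $\omega_\lambda(\mathcal V)\ge n(n+1)/2$ and therefore $\omega_0(\mathcal H)\ge n(n+1)/2$; combined with the upper bound this forces equality.

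The ``only if'' direction I would prove by contraposition: if some pair of hyperedges intersects, say $w\in E_a\cap E_b$, I will construct a discriminator of weight at most $n(n+1)/2-1$. The idea is that a single unit of weight placed on the shared vertex $w$ contributes simultaneously to $S_a$ and $S_b$ at unit cost, buying one ``free'' unit relative to the disjoint optimum; one then tops up the remaining edges on their private vertices to reach $n$ distinct positive targets, achieving total weight strictly below $n(n+1)/2$. The \emph{main difficulty} here is feasibility: arranging the top-ups so that no two edge-sums coincide while still realizing the claimed saving of at least one. Establishing this, together with the disjoint computation above, completes the characterization of the extremal case.
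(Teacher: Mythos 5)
Your ``if the edges are disjoint then equality'' step is correct and essentially the paper's argument (every vertex lies in at most one hyperedge, so the $n$ distinct positive edge-sums force every edge-discriminator to have weight at least $1+2+\cdots+n$). The two load-bearing halves of the theorem, however, are exactly where your proposal stops at an acknowledged difficulty, and the difficulty is fatal to the stated budgets. In your edge-by-edge greedy, when $E_k$ has no vertex private to it relative to $E_1,\dots,E_{k-1}$, incrementing any $v\in E_k$ by $t$ shifts $S_j$ by $t$ for \emph{every} earlier edge $E_j\ni v$; the values of $t$ you must dodge are then indexed by pairs $(E_j,E_{j'})$ with $j,j'\le k$ of which exactly one contains $v$, and there can be order $k^2$ of these, not $k-1$. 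This drives the per-step cost to $\Theta(k^2)$ and the total to $\Theta(n^3)$, and neither ``resolve covered edges last'' nor a local exchange repairs it: already for $E_1=\{a\}$, $E_2=\{b\}$, $E_3=\{a,b\}$ no ordering gives every edge a private vertex. The idea you are missing is the paper's: process \emph{vertices} in a fixed order and charge each pair of hyperedges $\{E_i,E_j\}$ to the single vertex that is maximal in $E_i\,\Delta\,E_j$. Once that vertex's label is set, every later vertex belongs to both or to neither of $E_i,E_j$, so the difference of their sums is frozen forever; choosing the label at $\nu_k$ to avoid the at most $\chi(\nu_k)+\pi(\nu_k)$ bad values there (pairs differentiated at $\nu_k$, plus edges whose maximal vertex is $\nu_k$ --- the latter, via an auxiliary empty edge, also guarantees positivity) and summing over all vertices gives exactly $\binom{n}{2}+n=n(n+1)/2$.

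The same device is what your ``only if'' direction needs. Topping up ``the remaining edges on their private vertices'' presupposes private vertices that need not exist, and you leave the feasibility of the claimed saving unproved, which is the entire content of that direction. The paper's route is to place a shared vertex $v_o\in E_x\cap E_y$ last in the vertex ordering and seed the construction with the initial function $\kappa=\delta_{v_o}$; since $v_o$ is then the maximal vertex of at least two edges, i.e.\ $\pi(\nu_m)\ge 2$, the refined bound of Lemma \ref{lm:algogeneral},
\begin{equation*}
\omega_{\lambda_{\kappa,\nu}}(\mathcal V)\le \frac{n(n+1)}{2}-\sum_{k=1}^m\pi(\nu_k)\pmb1\{\kappa(\nu_k)>0\}+\sum_{k=1}^m\kappa(\nu_k)\le \frac{n(n+1)}{2}-2+1,
\end{equation*}
yields a discriminator of weight at most $n(n+1)/2-1$. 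To complete your proof you would need either to adopt this charging-by-differentiating-vertex scheme or to supply a genuinely different mechanism for both the upper bound and the strict saving; as written, both are open.
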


Next, we show that the edge-discrimination problem for $r$-uniform hypergraphs is related to Sidon sequences from additive number theory. A {\it Sidon sequence} is a sequence of natural numbers $A = \{a_1, a_2, \ldots \}$ such that all the pairwise sums $a_i + a_j$ $(i \leq j)$ are different \cite{erdosturansidon}. $B_h$-{\it sets} are generalizations of Sidon-sequences in which all $h$-element sums are mutually distinct \cite{sidonsurvey}. Using the connection between edge-discriminators and $B_h$ sets, we obtain another bound on the weight of the optimal edge-discriminator for $r$-uniform hypergraphs in terms of the number of vertices of the hypergraph.

\begin{ppn}For any $r$-uniform hypergraph $\mathcal H=(\bbV, \bbE)$, with $|\mathcal V|=m$, $\omega_0(\mathcal H)\leq m^{r+1}+o(m^{r+1})$, and the bound is attained up to a constant factor by the optimal edge-discriminator of the complete $r$-uniform hypergraph on $\mathcal V$. 
\label{th:r+1}
\end{ppn}

Obtaining nontrivial lower bounds on the weight of the optimal edge-discriminator for general hypergraphs is a major challenge. It is easy to show that $\sum_{v\in \mathcal V} \lambda_\mathcal H (v)\geq \max\{n, \delta(\delta+1)/2\}$, where $|\mathcal E|=n$ and $\delta$ is the size of the maximum matching in $\mathcal H$. Moreover, there is a hypergraph which attains this lower bound. However, like the irregularity strength, finding the optimal edge-discriminators is generally difficult even for very special hypergraphs. Nevertheless, we were able to obtain optimal edge-discriminators for some specific hypergraphs, which include paths, cycles and the complete bipartite graph. Constructing these optimal edge-discriminators are in itself interesting combinatorial problems and the results we obtain are summarized below.


\begin{thm}$\omega_0(P_m)=\lceil m(m-1)/4\rceil$, where $P_m$ is the path with $m$ vertices. 
\label{th:path}
\end{thm}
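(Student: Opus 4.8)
The path $P_m$ has vertex set $\{v_1, \dots, v_m\}$ and edge set $\{\{v_i, v_{i+1}\} : 1 \le i \le m-1\}$, so it is a $2$-uniform hypergraph with $n = m-1$ hyperedges. Writing $x_i = \lambda(v_i)$ and $s_i = x_i + x_{i+1}$ for the $i$-th edge-sum, an edge-discriminator is exactly a choice of nonnegative integers $x_1, \dots, x_m$ for which $s_1, \dots, s_{m-1}$ are distinct and positive, and the weight to be minimized is $W = \sum_{i=1}^m x_i$.

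The lower bound is immediate from a counting identity. Since the endpoints $v_1, v_m$ have degree one while every other vertex has degree two, summing the edge-sums gives $\sum_{i=1}^{m-1} s_i = (W - x_m) + (W - x_1) = 2W - x_1 - x_m$. As the $s_i$ are $m-1$ distinct positive integers, $\sum_i s_i \ge 1 + 2 + \cdots + (m-1) = m(m-1)/2$, and since $x_1, x_m \ge 0$ we obtain $2W \ge m(m-1)/2$, i.e. $W \ge m(m-1)/4$. Because $W$ is an integer this forces $W \ge \lceil m(m-1)/4 \rceil$, the desired lower bound.

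For the matching upper bound the plan is to exhibit an explicit discriminator attaining it. I would prescribe the edge-sums to be the permutation of $\{1, 2, \dots, m-1\}$ that lists the odd numbers in increasing order followed by the even numbers in decreasing order, i.e. $s = (1, 3, 5, \dots, 4, 2)$, and then recover the labels from $x_1 = 0$ via the recursion $x_{i+1} = s_i - x_i$. Concretely this produces labels $x_i = i-1$ increasing through the ``odd'' block up to a peak near the middle of the path and then descending through the ``even'' block; for instance $x = (0,1,2,3,4,4,2,2,0)$ when $m = 9$ and $x = (0,1,2,3,4,5,3,3,1,1)$ when $m = 10$. Since the edge-sums are by design precisely $1, \dots, m-1$, they are distinct and positive, so once nonnegativity of the $x_i$ is checked this is a valid edge-discriminator; and because $\sum_i s_i = m(m-1)/2$ is minimal, the identity above gives $2W = m(m-1)/2 + x_1 + x_m$, so it remains only to verify that $x_m \in \{0,1\}$, with $x_m = 0$ exactly when $m(m-1)/2$ is even, yielding $W = \lceil m(m-1)/4 \rceil$ in every case.

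The main obstacle is the verification accompanying the construction rather than any conceptual difficulty: I must confirm that the alternating partial sums $x_i = \sum_{j<i} (-1)^{i-1-j} s_j$ never become negative, and that the terminal value $x_m$ equals $0$ or $1$ according to the parity of $m(m-1)/2$. This is a routine but slightly tedious case analysis on $m \bmod 4$, which governs whether the odd and even blocks have equal length and whether the target is an integer or a half-integer. The cleanest route is to record closed forms for $x_i$ separately on the increasing and decreasing blocks and check the value at the boundary where the two blocks meet; the parity bookkeeping there is the only place where genuine care is needed.
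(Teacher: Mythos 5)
Your proposal is correct and follows essentially the same route as the paper: the identical double-counting lower bound (degree-one endpoints, distinct positive edge-sums, then integrality), and an upper-bound construction whose labels --- edge-sums listing the odd values increasing then the even values decreasing, unrolled from $x_1=0$ --- coincide exactly with the paper's explicit formulas in each residue class of $m$ modulo $4$. The verification you defer (nonnegativity of the labels and $x_m\in\{0,1\}$ according to the parity of $m(m-1)/2$) is precisely what the paper's closed-form case analysis on $m \bmod 4$ supplies, and it does go through.
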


Using the construction of the optimal edge-discriminator for paths we construct the optimal edge-discriminators for cycles.

\begin{thm}$\omega_0(C_m)=\lceil m(m+1)/4 \rceil$, where $C_m$ is the cycle with $m$ vertices. 
\label{th:cycle}
\end{thm}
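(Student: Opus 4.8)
The plan is to prove matching lower and upper bounds on $\omega_0(C_m)$. For the lower bound I would exploit the fact that every vertex of $C_m$ has degree exactly $2$, so each label is counted in precisely two edge sums. Writing $s_1, \ldots, s_m$ for the $m$ edge sums of an arbitrary edge-discriminator $\lambda$, a double-counting (handshake) argument gives $\sum_{i=1}^m s_i = \sum_{v \in \mathcal V} \deg(v)\lambda(v) = 2\,\omega_\lambda(\mathcal V)$. Since the $s_i$ are distinct positive integers, $\sum_i s_i \ge 1 + 2 + \cdots + m = m(m+1)/2$, whence $\omega_\lambda(\mathcal V) \ge m(m+1)/4$; because the weight is an integer this forces $\omega_0(C_m) \ge \lceil m(m+1)/4 \rceil$. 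Note that $m(m+1)/4$ fails to be an integer exactly when $m \equiv 1, 2 \pmod 4$, which are precisely the cases where the ceiling is strict, so integrality alone already delivers the stated bound.

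For the upper bound I would produce an explicit discriminator of weight exactly $\lceil m(m+1)/4 \rceil$, leveraging the optimal path labeling of Theorem \ref{th:path}. The lower-bound analysis shows that an optimal labeling must keep the edge sums as small as possible, so the natural target is to realize the $m$ edge sums as the set $\{1, 2, \ldots, m\}$ when $m(m+1)/2$ is even (i.e.\ $m \equiv 0, 3 \pmod 4$), and as a minimally enlarged set such as $\{1, \ldots, m-1, m+1\}$ when $m(m+1)/2$ is odd (i.e.\ $m \equiv 1, 2 \pmod 4$); in every case the target set has even sum, consistent with $\sum_i s_i = 2\,\omega_\lambda(\mathcal V)$, and half that sum equals $\lceil m(m+1)/4\rceil$. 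Starting from the optimal labeling of the spanning path $v_1 v_2 \cdots v_m$ of $C_m$, I would adjust the labels so that, traversing the cycle, the consecutive edge sums run through the prescribed target set and the wrap-around edge $\{v_m, v_1\}$ absorbs the remaining value, propagating from a chosen value of $\lambda(v_1)$ around the cycle.

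The main obstacle is the parity constraint peculiar to even cycles. Summing the edge sums with alternating signs around the cycle telescopes to $0$ when $m$ is even, so for even $m$ the target set of edge sums must split into two blocks of equal sum, namely the odd-indexed and the even-indexed edges. I would verify case by case (according to $m \bmod 4$) that the chosen target set admits such an equal-sum splitting and that the resulting system $\lambda(v_i) + \lambda(v_{i+1}) = s_i$ has a solution in non-negative integers; the freedom in choosing $\lambda(v_1)$ together with a judicious cyclic ordering of the target sums should make every label non-negative. For odd $m$ no alternating constraint arises and the linear system is uniquely solvable once the cyclic order of sums is fixed, so only non-negativity must be checked, which again follows from ordering the sums appropriately. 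Carrying out these explicit constructions, and confirming in each residue class that the weight attains the lower bound $\lceil m(m+1)/4\rceil$, completes the proof.
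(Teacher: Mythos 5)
Your lower bound is complete and is exactly the paper's argument: since every vertex of $C_m$ has degree $2$, double counting gives $\sum_{i=1}^m \omega_\lambda(e_i)=2\,\omega_\lambda(\mathcal V)\geq m(m+1)/2$, and integrality of the weight yields the ceiling. Your identification of the target edge-sum sets ($\{1,\dots,m\}$ for $m\equiv 0,3\pmod 4$ and $\{1,\dots,m-1,m+1\}$ for $m\equiv 1,2\pmod 4$) and of the alternating-sum obstruction on even cycles is also correct, and these are precisely the edge sums the paper's construction realizes.

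The gap is in the upper bound: you never actually produce the labeling. The statement that ``a judicious cyclic ordering of the target sums should make every label non-negative'' is exactly the content that requires proof. For odd $m$ the system $\lambda(v_i)+\lambda(v_{i+1})=s_i$ has the unique solution $\lambda(v_1)=\tfrac12(s_1-s_2+\cdots+s_m)$ (an integer, since the alternating and total sums have the same parity and the total is even), but non-negativity of all $m$ labels is a genuine constraint on the cyclic arrangement; for even $m$ one needs, beyond the equal-sum split into odd- and even-indexed blocks, a value of the free parameter $\lambda(v_1)=t$ lying in an interval determined by partial alternating sums, which again depends on the arrangement. Neither existence claim is obvious, and the paper spends essentially all of its effort on this point: instead of solving the circulant system directly, it builds the cycle labeling by amalgamating the two endpoints of a path labeling of $P_{m+1}$ --- the optimal one from Theorem \ref{th:path} (both endpoints labeled $0$) when $m\equiv 0,3\pmod 4$, and, when $m\equiv 1,2\pmod 4$, a specially engineered variant (Lemma \ref{lm:path02}: a discriminator of $P_{m-1}$ with endpoint labels $0$ and $2$, edge sums covering $[m-2]$, and weight $(m-2)(m-1)/4+1$) extended by two further vertices labeled $m-1$ and $0$. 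To close your argument you would need to exhibit, for each residue class of $m$ modulo $4$, an explicit cyclic ordering of the target sums and verify non-negativity of the resulting labels (or prove a general existence lemma for such orderings); as written, the upper bound is a plan rather than a proof.
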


Next, we consider the optimal edge-discriminator for the {\it complete $r$-partite hypergraph}, which is a generalization of the complete bipartite graph. Let $A_1, A_2, \ldots, A_r$ be disjoint sets with $m_1, m_2, \ldots, m_r$ elements, respectively, where $m_1\geq m_2\geq\ldots\geq m_r$ are positive integers. If $\pmb a=(m_1, m_2, \ldots, m_r)$, the hypergraph $\mathcal H_r(\pmb a):=(\mathcal V_r, \bbE_r)$, with $\mathcal V_r= \bigcup_{i=1}^r A_i$ and 
$\bbE_r=A_1\times A_2\times\ldots \times A_r$ is called the {\it complete $r$-partite hypergraph}. We show that

\begin{thm}$\omega_0(\mathcal H_r(\pmb a))=m_r+\frac{1}{2}\cdot\sum_{q=1}^r(m_q-1)\prod_{s=1}^q m_s$. For the complete bipartite graph $K_{p, q}~ (p \geq q)$, $\omega_0(K_{p, q})=\frac{2q+p(p-1+q(q-1))}{2}$.

\label{th:r-partite}
\end{thm}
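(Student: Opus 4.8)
The plan is to establish matching upper and lower bounds: the upper bound by an explicit mixed-radix construction, the lower bound by a family of averaging inequalities that I combine with carefully chosen nonnegative multipliers. Throughout write $P_q=\prod_{s=1}^q m_s$ (with $P_0=1$), so that $N:=|\bbE_r|=P_r$, and for a labeling $\lambda$ let $W_i=\sum_{v\in A_i}\lambda(v)$, so the weight is $\sum_{i=1}^r W_i$.

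For the upper bound I would use a positional (mixed-radix) labeling: assign the vertices of $A_q$ the labels $\{0,P_{q-1},2P_{q-1},\dots,(m_q-1)P_{q-1}\}$ for $1\le q\le r$, and then add $1$ to the label of every vertex of the smallest part $A_r$. A hyperedge sum is then $1+\sum_{q=1}^r c_q P_{q-1}$ with $c_q\in\{0,\dots,m_q-1\}$; since $\sum_q c_q P_{q-1}$ is exactly the mixed-radix representation of an integer in $\{0,\dots,N-1\}$, all $N$ hyperedge sums are distinct, and the extra $+1$ (coming from the unique $A_r$-vertex in each hyperedge) makes every sum at least $1$, so this is a valid edge-discriminator. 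Its weight is $m_r+\sum_{q=1}^r P_{q-1}\binom{m_q}{2}=m_r+\tfrac12\sum_{q=1}^r(m_q-1)P_q$, using $P_{q-1}m_q=P_q$; this is the claimed value.

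For the lower bound, the key observation is that for every subset $S\subseteq\{1,\dots,r\}$ the partial sums $\sum_{i\in S}\lambda(v_i)$, taken over all ways of choosing $v_i\in A_i$ for $i\in S$, must themselves be pairwise distinct: if two such partial choices agreed, extending both by a common choice of vertices outside $S$ would yield two distinct hyperedges with equal sums. Hence the $\prod_{i\in S}m_i$ partial sums are distinct nonnegative integers, so their total is at least $\binom{\prod_{i\in S}m_i}{2}$; counting each vertex of $A_i$ (for $i\in S$) in exactly $\prod_{j\in S}m_j/m_i$ of these partial choices rewrites this total as $\sum_{i\in S}(\prod_{j\in S}m_j/m_i)W_i$ and yields $\sum_{i\in S}W_i/m_i\ge(\prod_{i\in S}m_i-1)/2$. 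Taking $S=\{1,\dots,k\}$ gives $\sum_{i\le k}W_i/m_i\ge(P_k-1)/2$ for $k<r$, while for $S=\{1,\dots,r\}$ the discriminator positivity forces the $N$ sums to be distinct \emph{positive} integers, improving the bound to $\sum_{i\le r}W_i/m_i\ge(N+1)/2$.

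Finally I would combine these prefix inequalities with the multipliers $y_k=m_k-m_{k+1}$ for $k<r$ and $y_r=m_r$ (here $m_{r+1}:=0$), which are nonnegative precisely because $m_1\ge\cdots\ge m_r$. Since $\sum_{k\ge i}y_k=m_i$, the left-hand sides combine to $\sum_i(\sum_{k\ge i}y_k)W_i/m_i=\sum_i W_i$, so the weight is bounded below by $\sum_{k<r}(m_k-m_{k+1})\tfrac{P_k-1}{2}+m_r\tfrac{N+1}{2}$; a short telescoping computation using $m_{k+1}P_k=P_{k+1}$ reduces this to exactly $m_r+\tfrac12\sum_{q=1}^r(m_q-1)P_q$, matching the construction. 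The complete bipartite statement is then the special case $r=2$, $(m_1,m_2)=(p,q)$, for which the formula becomes $\tfrac{2q+p(p-1+q(q-1))}{2}$. I expect the main obstacle to be the lower bound—specifically, recognizing that all prefix sumsets must be distinct and then selecting the telescoping multipliers $m_k-m_{k+1}$ that exploit the ordering of the parts; the upper-bound construction and its weight computation are comparatively routine.
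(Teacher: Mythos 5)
Your proposal is correct and follows essentially the same route as the paper: the identical mixed-radix construction (with the $+1$ placed on the smallest part $A_r$) for the upper bound, and for the lower bound the same key fact that the edge weights of each prefix restriction $\mathcal H_r^q$ must already be distinct, combined with the same telescoping coefficients $m_{q-1}-m_q$ (the paper packages this as an exact identity expressing $\sum_i\omega_\lambda(A_i)$ in terms of the $\omega_\lambda(\bbE_q)$, which is precisely your multiplier combination since $\omega_\lambda(\bbE_q)/\prod_{s\le q}m_s=\sum_{i\le q}W_i/m_i$). Your explicit justification of why the prefix sums must be distinct is a welcome clarification of a step the paper asserts without comment, but the argument is the same.
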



In Theorem \ref{th:hypergraph_main} we show that the weight of an optimal edge-discriminator for a hypergraph with $n$ hyperedges is at most $n(n+1)/2$. Moreover, the weight of any edge-discriminator is at least $n$. This motivates us to ask the following question: Given any integer $w\in [n, n(n+1)/2]$, whether there exists a hypergraph $\mathcal H$ with $n$ hyperedges such that $w$ is the weight of the optimal edge-discriminator on $\mathcal H$. We show that the answer is no, by proving the following theorem:

\begin{thm}There exists no hypergraph $\mathcal H=(\mathcal V, \mathcal E)$, with $|\mathcal E|=n~(\geq 3)$, such that the weight of the optimal edge-discriminator on $\mathcal H$ is $n(n+1)/2-1$. 
\label{th:no_function}
\end{thm}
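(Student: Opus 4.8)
The plan is to upgrade the strict inequality coming from Theorem~\ref{th:hypergraph_main} into a genuine gap. Since $\omega_0(\mathcal H)=n(n+1)/2$ holds exactly for the hypergraph whose edges are mutually disjoint, it suffices to prove that \emph{every} hypergraph with $n$ hyperedges, $n\geq 3$, that are not mutually disjoint satisfies $\omega_0(\mathcal H)\leq n(n+1)/2-2$; together these two facts show the value $n(n+1)/2-1$ is never attained. So I would argue by contradiction: suppose $\lambda$ is an optimal edge-discriminator with $\omega_0(\mathcal H)=n(n+1)/2-1$, and let $s_1<s_2<\cdots<s_n$ be the (distinct, positive, integer) edge-sums. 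Writing $d(v)$ for the number of hyperedges containing $v$, the basic identity $\sum_{i=1}^n s_i=\sum_{v\in\mathcal V}d(v)\lambda(v)=\omega_\lambda(\mathcal V)+D$, where $D:=\sum_{v\in\mathcal V}(d(v)-1)\lambda(v)\geq 0$, is the engine of the whole argument. Because the $s_i$ are $n$ distinct positive integers, $\sum_i s_i\geq n(n+1)/2$, so the assumption $\omega_\lambda(\mathcal V)=n(n+1)/2-1$ forces $D\geq 1$; in particular there is a vertex $v^\ast$ with $d(v^\ast)\geq 2$ and $\lambda(v^\ast)\geq 1$.

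First I would dispose of the case where the edge-sums are exactly $\{1,2,\dots,n\}$. Here $\sum_i s_i=n(n+1)/2$, so $D=1$, and since $D$ is a sum of non-negative integers equal to $1$, there is a unique ``double point'' $v^\ast$ with $d(v^\ast)=2$ and $\lambda(v^\ast)=1$, while every other vertex of degree $\geq 2$ receives label $0$. Consequently the positively-labelled part of $\mathcal H$ consists of two hyperedges $E_a,E_b$ meeting only at $v^\ast$, together with $n-2$ further hyperedges that are pairwise disjoint and each carry their sum on a private (degree-one) vertex. I would then rebuild a cheaper discriminator by loading weight onto the shared vertex: put $\lambda(v^\ast)=n-1$, arrange $E_a,E_b$ to have sums $n-1$ and $n$ (possible since at least one of them has a private vertex, else $s_a=s_b$), and give the remaining $n-2$ hyperedges the sums $1,2,\dots,n-2$ on their private vertices. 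The resulting weight is $(n-1)+1+\big(1+2+\cdots+(n-2)\big)=n(n-1)/2+1$, which is $\leq n(n+1)/2-2$ precisely when $n\geq 3$, contradicting the optimality of $\lambda$.

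It remains to treat the case $\{s_1,\dots,s_n\}\neq\{1,\dots,n\}$, which I expect to be the crux. Here $s_n\geq n+1$, and if $g$ denotes the smallest positive integer missing from $\{s_i\}$ then $g\leq n<s_n$, so at least one edge-sum lies above the gap $g$. The aim is to push an edge-sum down onto $g$: since $g$ is missing, replacing a sum $s_E>g$ by $g$ keeps all sums distinct and positive while strictly lowering the weight, contradicting optimality. When the maximal-sum edge carries label at least $s_n-g$ on a private vertex this is immediate. The genuine difficulty is the coupling of edge-sums through shared vertices: an edge's sum may be supported entirely on shared vertices, so lowering any one label drags several edge-sums down together and risks a collision. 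My plan is to decrement the shared vertex $v^\ast$ itself, which lowers all $d(v^\ast)\geq 2$ incident sums by one simultaneously and hence keeps them mutually distinct, so that the only possible harm is a single collision with an unaffected sum; I would then use the surplus $D\geq 2$ together with the gap $g$ to absorb that collision (if necessary by first relocating the offending sum into the gap) and still net a decrease of at least one in the weight. Making this last step rigorous---verifying that the gap and the slack $D\geq 2$ always suffice to absorb the induced collisions---is the main obstacle; once it is done, both cases contradict $\omega_0(\mathcal H)=n(n+1)/2-1$ and the theorem follows.
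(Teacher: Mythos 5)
Your overall strategy---work with an assumed optimal $\lambda$ of weight $n(n+1)/2-1$, exploit the identity $\sum_i s_i=\omega_\lambda(\mathcal V)+D$ with $D=\sum_v(d(v)-1)\lambda(v)$, and perturb the labeling---is genuinely different from the paper's, which never analyses an optimal labeling directly. Instead the paper runs the greedy Construction Algorithm (Lemma \ref{lm:algogeneral}) seeded with indicator functions $\delta_{v_o}$ to force structural constraints on the hypergraph itself: every vertex lies in at most two hyperedges, and all hyperedges other than one distinguished intersecting pair $F,G$ are mutually disjoint (Observation \ref{ob:cases}, Lemmas \ref{lm:less3}--\ref{lm:n(n+1)/2-1_structure}); it then writes down an explicit cheap discriminator for that configuration. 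Your Case 1 (edge sums exactly $\{1,\dots,n\}$, hence $D=1$) is essentially correct: the unique vertex $v^\ast$ with $d(v^\ast)=2$, $\lambda(v^\ast)=1$ forces every other positively labelled vertex to have degree one, so each of the $n-2$ edges besides $E_a,E_b$ owns a private positive vertex, and your relabeling achieves weight $n(n-1)/2+1\le n(n+1)/2-2$. (Your parenthetical claims that those $n-2$ edges are pairwise disjoint and that $E_a,E_b$ meet only at $v^\ast$ are not justified---they may share label-zero vertices---but your construction does not actually use them.)

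The genuine gap is Case 2, which you yourself flag as unresolved, and it is not a routine verification. Decrementing $\lambda(v^\ast)$ lowers all $d(v^\ast)$ incident edge sums by one simultaneously, and each of these can separately collide with the old sum of a distinct non-incident edge; so you may face up to $d(v^\ast)$ collisions, not ``a single collision,'' and the surplus $D\ge 2$ gives you only one unit of slack after the decrement. Moreover, your fallback of ``relocating the offending sum into the gap $g$'' presupposes that the offending edge carries at least $s_E-g$ of its weight on a private vertex, which is exactly the kind of coupling through shared vertices you identified as the difficulty; nothing in the hypothesis $D\ge2$ rules out that every colliding edge is supported entirely on shared vertices. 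Without an argument that bounds the number of collisions or supplies enough private vertices to reroute them, the contradiction in Case 2 is not established, and this case is the heart of the theorem. A workable repair would be to import the paper's structural step: seeding Lemma \ref{lm:algogeneral} with $\delta_{v_o}$ at a vertex of degree $\ge3$, or with $\delta_{v_1}+\delta_{v_2}$ at two vertices each maximal for two edges, already yields a discriminator of weight $\le n(n+1)/2-2$, which collapses Case 2 to the rigid configuration of Lemma \ref{lm:n(n+1)/2-1_structure} where an explicit labeling finishes the job.
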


This shows that the problem of attainability of weights is an interesting combinatorial problem which might have surprising consequences. We discuss the attainability problem in more details later on. 

The paper is organized as follows: The proof of Theoerem \ref{th:hypergraph_main} where we give a general upper bound on the weight of an optimal edge-discriminator is given in Section \ref{sec:upperbound}. In Section \ref{sec:sidon} we outline the connection between Sidon sequences and edge-discriminators in uniform hypergraphs. A short discussion on lower bounds is given in Section \ref{sec:lowerbound}. The computations of the optimal edge-discriminators for special hypergraphs is in Section \ref{sec:computation}. The problem of non-attainable weights and the proof of Theorem \ref{th:no_function} is in Section \ref{sec:nonattainable}. In Section \ref{sec:application} we discuss edge-discriminators in geometric hypergraphs and its potential application to digital image indexing. Finally, in Section \ref{sec:conclusions} we summarize our work and give directions for future research.




\section{Constructing Edge-Discriminators in General Hypergraphs}
\label{sec:upperbound}

In this section, we shall give an algorithm for constructing an edge-discriminator for a general hypergraph, using which we shall prove Theorem \ref{th:hypergraph_main}. We begin by introducing some notations. Consider a hypergraph $\mathcal H=(\mathcal V, \mathcal E)$, with $|\mathcal V|=m$ and $\mathcal E=\{ E_1,  E_2, \ldots,  E_n\}$, where $|\bbE|=n$ and $E_i \subset \mathcal V$ for $i\in [n]:=\{1, 2, \ldots, n\}$. An {\it ordering} on $\mathcal V$ is a bijective function $\nu: [m] \rightarrow \bbV$. We shall write $\nu_i:=\nu(i)$, for $i \in [m]$. Thus, with respect to the ordering $\nu$, the vertices in $\bbV$ will be indexed as $\{\nu_1, \nu_2, \ldots, \nu_m\}$. For two vertices $\nu_i, \nu_j \in \mathcal V$, we say $\nu_i$ {\it is less than} $\nu_j$ with respect to $\nu$, if $i < j$. The {\it maximal vertex} of $\mathcal W\subset \mathcal V$ is the vertex $\nu_k\in \mathcal W$ such that for all vertices $\nu_i \in \mathcal W\backslash\{\nu_k\}$, we have $i<k$. The maximal vertex of $\mathcal W$ will be denoted by $\nu(\mathcal W)$. For two hyperedges $E_i, E_j\in \mathcal E$ ($i\ne j$) the vertex $\nu(E_i\Delta E_j):=\nu(E_i, E_j)$ will be called the {\it differentiating vertex} of the edges $E_i$ and $E_j$, where for any two sets $A$ and $B$, $A\Delta B=(A\backslash B)\cup (B\backslash A)$. The edge which contains the differentiating vertex $\nu(E_i, E_j)$ will be denoted by $\overline{E}_{ij}$, and the edge which does not contain the differentiating vertex will be denoted by $\underline{E}_{ij}$. 

For any function $\lambda:\mathcal V\rightarrow \mathbb{Z}^{+}\cup \{0\}$, the {\it weight} of any subset $\mathcal W$ of $\mathcal V$ is defined as $\omega_\lambda(\mathcal W)=\sum_{v\in \mathcal W}{\lambda(v)}$. Thus, a function $\lambda$ is edge-discriminating if the weights of all the edges in $\bbE$ are distinct. 


\subsection{Proof of Theorem \ref{th:hypergraph_main}}


Given the hypergraph $\mathcal H=(\bbV, \bbE)$, with $|\bbV|=m$ and $|\bbE|=n$, for which we need to construct an edge-discriminator, consider the hypergraph $\mathcal H_0=(\bbV, \mathcal F)$, where $\mathcal F=\bbE\cup\{\emptyset\}$. Fix an ordering $\nu$ on $\mathcal V$. Let $\lambda: \mathcal V\rightarrow \mathbb Z^+\cup\{0\}$ be a function initialized as $\lambda(v)=0$, for all $v\in \mathcal V$. We iteratively update the value of the function at the vertices in the ordering induced by $\nu$. Abusing notation we will denote the function by $\lambda$ throughout the iterative procedure. Once $\lambda(\nu_1), \lambda(\nu_2), \ldots, \lambda(\nu_{k-1})$ are updated, update $\lambda(\nu_{k})$ by adding the least non-negative integer not in the set $$\mathcal A(\nu_k)=\{\omega_\lambda(\underline{E}_{ij})-\omega_\lambda(\overline{E}_{ij}):\nu( E_i, E_j)=\nu_k, E_i, E_j \in \mathcal F\}.$$ This implies that $\lambda(\nu_{k})$ is at most $|\mathcal A(\nu_k)|$, since initially $\lambda(\nu_{k})$ was $0$ .
Note that for any two hyperedges $E_i, E_j \in \mathcal F$, such that $\nu( E_i, E_j)=\nu_k$,  
all the vertices which are greater than $\nu_k$ are common to both or belongs to neither of the edges $E_i$ and $E_j$. Therefore, according to the above construction $\omega_\lambda(\underline{E}_{ij})-\omega_\lambda(\overline{E}_{ij})$ cannot change once $\lambda(\nu_k)$ is assigned. Hence, by the choice of $\lambda(\nu_k)$, $\omega_\lambda(E_i)\ne \omega_\lambda(E_j)$, from the $k$-th step onwards and therefore eventually. In particular on taking one of the edges in the pair to be $\emptyset$ this  implies that eventually $\omega_\lambda(E_i)>0$ for all $E_i\in \mathcal E$, as $\omega_\lambda(\emptyset)=0$. Therefore at the end of the updating procedure the function $\lambda$ is an edge-discriminator on $\mathcal H=(\mathcal V, \mathcal E)$. 

Next, observe that $|\mathcal A(\nu_k)|$ is at most the number of pairs of edges in $\mathcal F$ which has $\nu_k$ as a differentiating vertex. As $\nu(E_i, \emptyset)=\nu(E_i)$, it immediately follows that 
$|\mathcal A(\nu_k)|\leq \chi(\nu_k)+\pi(\nu_k)$, where $\chi(\nu_k)$ denotes the number of pairs of edges in $\bbE$ for which $\nu_k$ is a differentiating vertex, and $\pi(\nu_k)$ is the number of edges in $\bbE$ for which $\nu_k$ is the maximal vertex. This implies that 
\begin{eqnarray}
\sum_{v\in \mathcal V}\lambda(v)=\sum_{k=1}^{|\mathcal \bbV|} \lambda(\nu_{k}) &\leq&\sum_{k=1}^{m}\chi(\nu_{k}) + \sum_{k=1}^{m} \pi(\nu_{k})\nonumber \\
&=& \frac{n(n-1)}{2}+n=\frac{n(n+1)}{2},
\label{eqn:n(n+1)/2_main_II}
\end{eqnarray}
which completes the proof of the first part of Theorem \ref{th:hypergraph_main}.

\subsubsection{General Algorithm for Constructing Edge-Discriminators}

Before we complete the proof of Theorem \ref{th:hypergraph_main} we generalize our procedure for constructing an edge-discriminator. This digression is needed to complete the proof of
Theorem \ref{th:hypergraph_main}. Moreover, it also gives better insight in to the structure of edge-discriminating functions, and ultimately leads to a slightly improved upper bound on the weight of an optimal edge-discriminator.

The general algorithm is very similar to the algorithm in the proof of Theorem \ref{th:hypergraph_main}, but instead of starting with the zero function, we start with any arbitrary integer valued function $\kappa$, and execute the same procedure to get an edge-discriminator. Typically, depending on the structure of the hypergraph one can choose the initial function to obtain sharper bounds. More formally, the {\it Construction Algorithm} takes a hypergraph $\mathcal H=(\mathcal V, \mathcal E)$, an ordering $\nu$ on $\mathcal V$, and any function $\kappa: \mathcal V\rightarrow \mathbb Z^+\cup\{0\}$, and returns an edge-discriminator $\lambda_{\kappa, \nu}: \mathcal V\rightarrow \mathbb Z^+\cup\{0\}$. 

\begin{algorithm}[h]
\caption{{\it Construction Algorithm}: Edge-Discriminator Construction Algorithm.}
\hrule
\KwIn{A hypergraph $\mathcal H=(\mathcal V, \mathcal E)$, an ordering $\nu$ on $\mathcal V$, and any function $\kappa: \mathcal V\rightarrow \mathbb Z^+\cup\{0\}$.}
\KwOut{An edge-discriminator $\lambda_{\kappa, \nu}: \mathcal V\rightarrow \mathbb Z^+\cup\{0\}$.}
Initialize $\lambda_{\kappa, \nu}(v)=\kappa(v)$, for all $v\in \mathcal V$.\\
\While{$1< k\leq m$}{
Denote $\mathcal B(\nu_k)=\{|\omega_{\lambda_{\kappa, \nu}}(\underline{E}_{ij})-\omega_{\lambda_{\kappa, \nu}}(\overline{E}_{ij})|:\nu( E_i, E_j)=\nu_k, E_i, E_j \in \mathcal E\}$, $\mathcal C(\nu_k)=\{\omega_{\lambda_{\kappa, \nu}}(E_{i}):\nu( E_i, \emptyset)=\nu_k, E_i \in \mathcal E\}$, and $\mathcal A(\nu_k)=\mathcal B(\nu_k)\cup\mathcal C(\nu_k)$\;

\eIf{$|\mathcal A(\nu_k)|\ne 0$}{
\If{$\kappa(\nu_k)=0$}
{Define $\lambda_{\kappa, \nu}(\nu_{k})=\inf\{\mathbb Z^+\cup\{0\}\backslash\mathcal A(\nu_k)\}$,
}
\If{$\kappa(\nu_k)> 0$}
{
Define $\lambda_{\kappa, \nu}(\nu_{k})=\kappa(\nu_k)+\inf\{\mathbb Z^+\cup\{0\}\backslash\mathcal B(\nu_k)\}$.
}
}
{Define $\lambda_{\kappa, \nu}(\nu_{k})=\kappa(\nu_{k})$,
}
$k\leftarrow k+1$.
}
Return the function $\lambda_{\kappa, \nu}: \mathcal V\rightarrow \mathbb Z^+\cup\{0\}$.
\hrule
\label{algo:aldo_ed_discrimination}
\end{algorithm}


\begin{lem}
Given any hypergraph $\mathcal H=(\mathcal V, \mathcal E)$, an ordering $\nu$ on $\mathcal V$, and any function $\kappa: \mathcal V\rightarrow \mathbb Z^+\cup\{0\}$, the {\em Construction Algorithm} produces
an edge-discriminator $\lambda_{\kappa, \nu}: \mathcal V\rightarrow \mathbb Z^+\cup\{0\}$ on $\mathcal H$ such that $\omega_{\lambda_{\kappa, \nu}}(\mathcal V)\leq \frac{n(n+1)}{2}-\sum_{k=1}^m\pi(\nu_k)\pmb1\{\kappa(\nu_k)>0\}+\sum_{k=1}^m\kappa(\nu_k)$.
\label{lm:algogeneral}
\end{lem}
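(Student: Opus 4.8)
The plan is to follow the template of the proof of Theorem \ref{th:hypergraph_main}, adapting it to the arbitrary initial function $\kappa$ and the two update branches, and then to track the per-vertex weights more carefully so as to extract the improvement. The argument divides into two parts: verifying that $\lambda_{\kappa,\nu}$ is an edge-discriminator, and establishing the weight inequality.

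First I would record the structural fact underlying everything. For a pair $E_i,E_j$ with differentiating vertex $\nu_k=\nu(E_i,E_j)$, every vertex greater than $\nu_k$ lies in $E_i\cap E_j$ or in neither, since $\nu_k$ is the maximal element of $E_i\Delta E_j$. Hence $\omega_{\lambda}(\overline{E}_{ij})-\omega_{\lambda}(\underline{E}_{ij})$ depends only on the $\lambda$-values at vertices $\leq\nu_k$; writing $\nu_k\in\overline{E}_{ij}$, it equals $\lambda(\nu_k)+d_{ij}$, where $d_{ij}$ is the already-frozen contribution of the vertices strictly below $\nu_k$. In particular this difference, and hence whether $E_i$ and $E_j$ are separated, is fixed once $\lambda(\nu_k)$ is assigned and never changes afterwards.

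For discrimination I would analyze the update at step $k$ in the two branches. If $\kappa(\nu_k)=0$, then $\mathcal B(\nu_k)$ records $|d_{ij}|$ for each relevant pair, and choosing $\lambda(\nu_k)$ to be the least non-negative integer outside $\mathcal A(\nu_k)\supseteq\mathcal B(\nu_k)$ forces $\lambda(\nu_k)\neq|d_{ij}|$; since $\lambda(\nu_k)\geq 0$ this already gives $\lambda(\nu_k)+d_{ij}\neq 0$ (the sum could vanish only if $\lambda(\nu_k)=-d_{ij}=|d_{ij}|$ with $d_{ij}\leq 0$). If $\kappa(\nu_k)>0$, the same computation with the interim value $\lambda(\nu_k)=\kappa(\nu_k)$ makes the entries of $\mathcal B(\nu_k)$ equal to $|\kappa(\nu_k)+d_{ij}|$; setting $\lambda(\nu_k)=\kappa(\nu_k)+s$ with $s$ the least non-negative integer outside $\mathcal B(\nu_k)$ turns the difference into $\kappa(\nu_k)+d_{ij}+s$, and the same sign argument (now $s\neq|\kappa(\nu_k)+d_{ij}|$ and $s\geq 0$) shows it is nonzero. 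This is the step I expect to be the main obstacle: one must check that the absolute values in $\mathcal B$ together with $s\geq 0$ correctly neutralise the possibly-negative target, and, crucially, that it is legitimate to avoid only $\mathcal B(\nu_k)$ rather than all of $\mathcal A(\nu_k)$ in this branch. The point resolving the latter is positivity: each edge $E$ has its weight frozen at its maximal vertex $\nu(E)$, and if $\kappa(\nu(E))>0$ then $\omega_\lambda(E)\geq\lambda(\nu(E))\geq\kappa(\nu(E))>0$ automatically, so the comparison with $\emptyset$ recorded by $\mathcal C$ is redundant precisely in the branch where it is dropped; when $\kappa(\nu(E))=0$ the set $\mathcal C$ is retained and enforces $\omega_\lambda(E)\neq 0$ exactly as in Theorem \ref{th:hypergraph_main}.

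For the weight bound I would estimate $\lambda(\nu_k)$ termwise. Since $\mathcal A(\nu_k),\mathcal B(\nu_k)\subseteq\mathbb Z^{+}\cup\{0\}$, the least missing non-negative integer is at most the cardinality of the set, giving $\lambda(\nu_k)\leq|\mathcal A(\nu_k)|\leq\chi(\nu_k)+\pi(\nu_k)$ when $\kappa(\nu_k)=0$, and $\lambda(\nu_k)\leq\kappa(\nu_k)+|\mathcal B(\nu_k)|\leq\kappa(\nu_k)+\chi(\nu_k)$ when $\kappa(\nu_k)>0$ (the case $\mathcal A(\nu_k)=\emptyset$ is subsumed, since then $\chi(\nu_k)=\pi(\nu_k)=0$). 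Both cases are captured by $\lambda(\nu_k)\leq\chi(\nu_k)+\pi(\nu_k)-\pi(\nu_k)\pmb1\{\kappa(\nu_k)>0\}+\kappa(\nu_k)\pmb1\{\kappa(\nu_k)>0\}$. Summing over $k$ and using $\sum_k\chi(\nu_k)=\binom{n}{2}$ (each of the $\binom{n}{2}$ edge-pairs has a unique differentiating vertex) and $\sum_k\pi(\nu_k)=n$ (each edge has a unique maximal vertex), together with the identity $\kappa(\nu_k)\pmb1\{\kappa(\nu_k)>0\}=\kappa(\nu_k)$, yields $\omega_{\lambda_{\kappa,\nu}}(\mathcal V)\leq\frac{n(n+1)}{2}-\sum_k\pi(\nu_k)\pmb1\{\kappa(\nu_k)>0\}+\sum_k\kappa(\nu_k)$, as claimed.
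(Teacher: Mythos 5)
Your proposal is correct and follows essentially the same route as the paper's proof: freeze the pairwise differences at the differentiating vertex, handle the two branches of the update (noting that positivity of $\kappa(\nu_k)$ makes the comparison with $\emptyset$ redundant), and bound $\lambda(\nu_k)$ by $|\mathcal A(\nu_k)|$ or $\kappa(\nu_k)+|\mathcal B(\nu_k)|$ before summing $\chi$ and $\pi$. Your explicit sign analysis of the absolute values in $\mathcal B(\nu_k)$ is a welcome extra degree of care that the paper's argument leaves implicit.
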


\begin{proof} The proof that $\lambda_{\kappa, \nu}$ is an edge-discriminator is very similar to the proof of Theorem \ref{th:hypergraph_main}. For $\kappa(\nu_k)=0$, we compare all pair of hyperedges $E_i, E_j \in \mathcal F$, such that $\nu( E_i, E_j)=\nu_k$. As all the vertices which are greater than $\nu_k$ are common to both or belongs to neither of the edges $E_i$ and $E_j$, according to the construction $\omega_\lambda(\underline{E}_{ij})-\omega_\lambda(\overline{E}_{ij})$ cannot change once $\lambda(\nu_k)$ is assigned. Hence, by the choice of $\lambda(\nu_k)$, $\omega_\lambda(E_i)\ne \omega_\lambda(E_j)$, from the $k$-th step onwards and therefore eventually, for any pair of edges such that $\nu( E_i, E_j)=\nu_k$ and $\kappa(\nu_k)=0$. 

When $\kappa(\nu_k)>0$, we define  $\lambda_{\kappa, \nu}(\nu_{k})=\kappa(\nu_k)+\inf\{\mathbb Z^+\cup\{0\}\backslash\mathcal B(\nu_k)\}$. This choice of $\lambda(\nu_k)$ differentiates any pair of edges $E_x, E_y \in \mathcal E$, with $\nu(E_x, E_y)=\nu_k$. Note that here we ignore the pairs $(E_z, \emptyset)$, where $\nu(E_z, \emptyset)=\nu_k$. However, as $\kappa(\nu_k)>0$, $\omega_{\lambda_{\kappa, \nu}}(E_z)>0$, for any edge $E_z$ such that $\nu(E_z, \emptyset)=\nu_k$. Hence, any pair of edges have distinct weights in this case as well.

Now, from the algorithm we can bound the weight of $\lambda_{\kappa, \nu}$ as follows:
\begin{eqnarray}
\sum_{k=1}^{m} \lambda_{\kappa, \nu}(\nu_{k}) &\leq&\sum_{k=1}^m |\mathcal A(\nu_k)|\pmb 1\{\kappa(\nu_k)=0\}+\sum_{k=1}^m |\mathcal B(\nu_k)|\pmb 1\{\kappa(\nu_k)>0\}
+\sum_{k=1}^m\kappa(\nu_k)\nonumber \\
&\leq&\sum_{k=1}^m |\mathcal B(\nu_k)|+\sum_{k=1}^m |\mathcal C(\nu_k)|\pmb 1\{\kappa(\nu_k)=0\}
+\sum_{k=1}^m\kappa(\nu_k)\nonumber \\
&\leq&\sum_{k=1}^m \chi(\nu_k)+\sum_{k=1}^m \pi(\nu_k)\pmb 1\{\kappa(\nu_k)=0\}
+\sum_{k=1}^m\kappa(\nu_k)\nonumber \\
&=&\frac{n(n-1)}{2}+\sum_{k=1}^m \pi(\nu_k)-\sum_{k=1}^m \pi(\nu_k)\pmb 1\{\kappa(\nu_k)>0\}
+\sum_{k=1}^m\kappa(\nu_k)\nonumber \\
&=&\frac{n(n+1)}{2}-\sum_{k=1}^m \pi(\nu_k)\pmb 1\{\kappa(\nu_k)>0\}+\sum_{k=1}^m\kappa(\nu_k).\nonumber
\label{eqn:n(n+1)/2_main_II}
\end{eqnarray}

\end{proof}

We will use the above lemma in completing the proof of Theorem \ref{th:hypergraph_main} and also in the proof of Theorem \ref{th:no_function}. 

\subsubsection{Completing the Proof of Theorem \ref{th:hypergraph_main}}


To complete the proof of Theorem \ref{th:hypergraph_main} we need to show that given any hypergraph 
$\mathcal H=(\mathcal V, \bbE)$, which contains two egdes $ E_x,  E_y\in \bbE$ such that $ E_x\cap  E_y\ne\emptyset$, it is possible to construct an edge-discriminating function $\lambda$ on $\mathcal V$ such that 
$\sum_{v\in \mathcal V}\lambda(v)< n(n+1)/2$. 

Let $|\mathcal V|=m$ and $v_o\in  E_x\cap  E_y$. Consider an ordering $\widetilde{\nu}: [m] \rightarrow \bbV$ such that $\widetilde\nu_m=\widetilde{\nu}(m)=v_o$. Then $v_o$ is the maximal vertex of both the edges $ E_x$ and $ E_y$, that is, $\pi(\widetilde\nu_m)\geq 2$. We start with the function $\kappa=\delta_{v_o}$ which takes the value $1$ at $v_o$ and $0$ everywhere else. Then from Lemma \ref{lm:algogeneral}, we know that there exists an edge-discriminator $\lambda_{\kappa, \widetilde{\nu}}$ such that $\sum_{v\in \mathcal V}\lambda_{\kappa, \widetilde{\nu}}(v)\leq n(n+1)/2-1$, which proves the tightness of the upper bound and completes the proof of Theorem \ref{th:hypergraph_main}.

\subsubsection{Consequences of Lemma \ref{lm:algogeneral}}
In this section  we discuss an immediate corollary of Lemma \ref{lm:algogeneral} which gives a slightly better upper bound on the weight of an edge-discriminator than in Theorem \ref{th:hypergraph_main}.

Given a collection of sets, a set which intersects all sets in the collection in at least one element is called a {\em hitting set}. Formally, for a hypergraph $\mathcal H=(\mathcal V, \mathcal E)$, a set $S\subseteq \mathcal V$ is called a hitting set of $\mathcal H$ if, for all edges $E\in \mathcal E$, $S\cap E\ne \emptyset$. A hitting set of the smallest size is called the {\em minimum hitting set} of $\mathcal H$. Hitting sets are also known as {\em vertex covers}, or more combinatorially {\em transversals}.

Using the above definition, we state the following immediate corollary of Lemma \ref{lm:algogeneral}:

\begin{cor}
For any hypergraph $\mathcal H=(\mathcal V, \mathcal E)$, $\omega_0(\mathcal H)\leq \frac{n(n-1)}{2}+\mathcal N(\mathcal H)$, where is the size of the minimum hitting set of $\mathcal N(\mathcal H)$.\label{corollary:upperimproved}
\end{cor}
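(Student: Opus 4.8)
The plan is to derive Corollary \ref{corollary:upperimproved} as a direct specialization of Lemma \ref{lm:algogeneral}. The lemma gives, for any starting function $\kappa$ and any ordering $\nu$, an edge-discriminator of weight at most
\[
\frac{n(n+1)}{2}-\sum_{k=1}^m \pi(\nu_k)\pmb1\{\kappa(\nu_k)>0\}+\sum_{k=1}^m \kappa(\nu_k).
\]
The trick is to choose $\kappa$ supported exactly on a minimum hitting set $S$ of $\mathcal H$, assigning value $1$ to each vertex of $S$ and $0$ elsewhere, so that $\sum_{k=1}^m \kappa(\nu_k)=|S|=\mathcal N(\mathcal H)$. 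This converts the bound into
\[
\frac{n(n+1)}{2}-\sum_{v\in S}\pi(v)+\mathcal N(\mathcal H),
\]
after rewriting the indicator sum as $\sum_{v\in S}\pi(v)$, since $\kappa(v)>0$ precisely on $S$.

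The heart of the argument is then to show that $\sum_{v\in S}\pi(v)\geq n$, which would immediately collapse the bound to $\tfrac{n(n-1)}{2}+\mathcal N(\mathcal H)$ as claimed. First I would recall that $\pi(v)$ counts the hyperedges whose maximal vertex (with respect to $\nu$) is $v$, so that $\sum_{v\in\mathcal V}\pi(v)=n$ because each of the $n$ hyperedges has exactly one maximal vertex. The task is thus to guarantee that every hyperedge's maximal vertex lands inside the hitting set $S$. Since $S$ meets every hyperedge, I am free to choose the ordering $\nu$ so that the vertices of $S$ occupy the top $|S|$ positions, i.e.\ every vertex of $S$ is larger (under $\nu$) than every vertex of $\mathcal V\setminus S$. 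With this ordering, each hyperedge $E$ contains at least one vertex of $S$, and that vertex automatically outranks all of $E$'s vertices outside $S$, forcing the maximal vertex $\nu(E)$ to lie in $S$. Consequently every hyperedge is counted in $\sum_{v\in S}\pi(v)$, giving $\sum_{v\in S}\pi(v)=n$.

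Substituting back yields $\omega_0(\mathcal H)\leq \tfrac{n(n+1)}{2}-n+\mathcal N(\mathcal H)=\tfrac{n(n-1)}{2}+\mathcal N(\mathcal H)$, which is the assertion. The main obstacle, and the only place that requires care, is the coordinated choice of $\kappa$ and $\nu$: one must ensure the \emph{same} ordering $\nu$ that places $S$ on top is the ordering used in applying Lemma \ref{lm:algogeneral}, so that the set on which $\kappa$ is positive coincides with the set capturing all the maximal vertices. Once this compatibility is in place the computation is routine, and the corollary follows for the optimal discriminator since $\omega_0(\mathcal H)$ is by definition the minimum weight over all edge-discriminators, hence at most the weight of the one produced by the algorithm.
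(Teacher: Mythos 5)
Your proposal is correct and follows essentially the same route as the paper: apply Lemma \ref{lm:algogeneral} with $\kappa$ the indicator of a set that captures every hyperedge's maximal vertex, so that $\sum_k\pi(\nu_k)\pmb1\{\kappa(\nu_k)>0\}=n$ and the bound collapses to $\tfrac{n(n-1)}{2}+\mathcal N(\mathcal H)$. The only cosmetic difference is that the paper takes $\kappa=\pmb1\{\pi(v)>0\}$ for an arbitrary ordering and then minimizes $|N(\nu)|$ over orderings, whereas you fix a minimum hitting set first and build a compatible ordering --- which in fact makes explicit the step ($\min_\nu|N(\nu)|\le\mathcal N(\mathcal H)$) that the paper leaves as an observation.
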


\begin{proof}
For a fixed ordering $\nu$, denote by $N(\nu)$ the set of vertices $v\in \mathcal V$ with $\pi(v) >0$. Observe that $\min_\nu |N(\nu)|=\mathcal N(\mathcal H)$.

Now, consider any ordering $\nu$ on $\mathcal V$. Define $\kappa(v)=\pmb1\{\pi(v)>0\}$. Lemma \ref{lm:algogeneral} then implies that there exists an edge-discriminator $\lambda_{\kappa, \nu}$ such that $\sum_{v\in \mathcal V}\lambda_{\kappa, \nu}(v)\leq n(n-1)/2+|N(\nu)|$. The result follows by taking minimum over all orderings on $\mathcal V$. 
\end{proof}

\begin{remark} Note that the bound in Corollary \ref{corollary:upperimproved} is slightly better than the general upper bound proved in Theorem \ref{th:hypergraph_main}. Moreover, it is easy to see that this bound is attained by the optimal edge-discriminator of the star-graph $T_n$ on $n+1$ vertices and $n$ edges. This follows from the fact that the central vertex of $T_n$ is the minimum hitting set for $T_n$, and so $\mathcal N(\mathcal H)=1$.
\label{rm:nuattain}
\end{remark}

\section{Edge Discriminators in $r$-Uniform Hypergraphs}
\label{sec:sidon}

A $r$-uniform hypergraph is a hypergraph $\mathcal H=(\bbV, \bbE)$, where all the hyperedges in $\bbE$ have cardinality $r$. In other words, a $r$-uniform hypergraph is a set $\bbV$, and a collection $\bbE$ of $r$-element subsets of $\bbV$.

If $\bbE$ consists of the set of all $r$-element subsets of $\bbV$ then the hypergraph $\mathcal H=(\bbV, \bbE)$ is called the {\it complete $r$-uniform hypergraph} on $\bbV$, and is denoted by $\mathcal K^r_m$.

Consider an $r$-uniform hypergraph on $n$-vertices and let $\mathcal V=\{v_1, v_2, \ldots, v_m\}$. We can obtain an edge discriminator $\lambda: \bbV\rightarrow \mathbb Z^{+}\cup \{0\}$ if the sequence $\{\lambda(v_1), \lambda(v_2), \ldots, \lambda(v_m)\}$, has the property that sum of the elements of all its $r$-element subsets are mutually distinct. This property of the sequence $\lambda(v_1), \lambda(v_2), \ldots, \lambda(v_m)$ is closely related to the notion of Sidon sequences from additive number theory.

\subsection{Sidon Sequences and Their Generalizations}

A {\it Sidon sequence} is a sequence of natural numbers $A = \{a_1, a_2, \ldots \}$ such that all the pairwise sums $a_i + a_j$ $(i \leq j)$ are different. This problem was introduced by Sidon in 1932, during his investigations in Fourier analysis. The celebrated combinatorial problem, which asks for estimates of the maximum number of elements $s(m)$ from $\{1, 2, \ldots, m\}$ which form a Sidon sequence, was posed by Erd\H{o}s and Tur\' an \cite{erdosturansidon}. They proved that $s(m)\leq m^{1/2}+O(m^{1/4})$, which is the best possible upper bound except for the estimate of the error-term. The upper bound was refined by Lindstr\"om \cite{lindstrom} to $s(m)\leq m^{1/2}+m^{1/4}+1$ and further improved by Cilleruello \cite{cilleruellojcta}. A conjecture of Erd\H{o}s, with a 500 dollars reward attached to it, says that $s(m)=m^{1/2}+O(1)$.

Sidon-sequences can be generalized by considering sequences in which all $h$-element sums are mutually distinct. This leads to the following definition:

\begin{definition}
For a positive integer $h \geq 2$, a sequence of positive integers $A=\{a_1, a_2, \ldots\}$ is called a $B_h$-set if for every
positive integer $m$, the equation
$$m = a_1 + a_2 + \ldots + a_h, ~~~ a_1 \leq a_2 \leq \ldots, \leq a_h, ~~~ a_i \in  A,$$
has, at most, one solution. Let $F_h(m)$ denote cardinality of the largest $B_h$ set that can be selected
from the set $\{1, 2, \ldots, N\}$.
\end{definition}

Bose and Chowla \cite{bosechowla} proved that $F_h(m)\geq m^{1/h} + o(m^{1/h})$.
An easy counting argument implies that
$$F_h(m) \leq (hh!m)^{1/h}.$$

The above upper bound has gone through many improvements and refinements. The general upper bound has the form $F_h(m)\leq c(h)m^{1/h}+o(m^{1/h})$, where $c(h)$ is a constant depending on $h$. For specific values of $c(h)$ refer to Cilleruello \cite{cilleruello_upper}, Cilleruello and Jimenez \cite{cilleruello_mathematik}, Jia \cite{jiasidon}, and the references therein. Other related results and problems on Sidon sequences and $B_h$-sets can be found in the surveys \cite{sequences_book,sidonsurvey}.

\subsection{Proof of Proposition \ref{th:r+1}}

In this section we shall use the notion of $B_h$-sets to obtain new bounds on the weight of an edge-discriminator
in $r$-uniform hypergraphs. Let $G_h(m)$ be the minimum of the maximum element taken over all $B_h$-sets of length $m$. In other words, $G_h(m)$ is the inverse function of $F_h(m)$, and a lower bound for $F_h(m)$ corresponds to an  upper bound for $G_h(m)$.

Consider a $r$-uniform hypergraph $\mathcal H=(\mathcal V, \mathcal E)$, with $\mathcal V=\{v_1, v_2, \ldots, v_m\}$. Note that a function $\lambda: \mathcal V\rightarrow \mathbb Z^+\cup\{0\}$ such that $\{\lambda(v_1), \lambda(v_2), \ldots, \lambda(v_m)\}$ is a $B_r$-set is a edge-discriminator for $\mathcal H$. Now, as $G_r(m)\leq m^{r}+o(m^r)$, we have

Therefore, $$\omega_\lambda(\mathcal H)=\sum_{i=1}^m\lambda(v_i)\leq m\cdot\max_{1\leq i \leq m}\lambda(v_i)\leq m^{r+1}+o(m^{r+1}).$$

In case of the complete $r$-uniform hypergraph $\mathcal K^r_m=(\mathcal V, \mathcal E)$, we have $|\bbE|={{m\choose{r}}}$ and every vertex $v\in \mathcal V$ belongs to ${{m-1}\choose{r-1}}$ hyperedges.
This implies that any edge-discriminator $\lambda: \mathcal V\rightarrow \mathbb Z^+\cup\{0\}$ on $\mathcal K_r(\mathcal V)$ satisfy
$$\sum_{E\in \mathcal E}\omega_\lambda(E)=\sum_{E\in \mathcal E}\sum_{v\in E}\lambda(v)={m-1\choose{r-1}}\sum_{v\in \mathcal V}\lambda(v).$$
As $\omega_\lambda(E)$ is distinct for all $E\in \bbE$, we have $\sum_{E\in \mathcal E}\omega_\lambda(E) \geq |\mathcal E|(|\mathcal E|+1)/2$. Therefore,
$$\omega_\lambda(\mathcal K^r_m)=\sum_{v\in \mathcal V}\lambda(v)=\frac{1}{{m-1\choose{r-1}}}\sum_{E\in \mathcal E}\omega_\lambda(E) \geq \frac{{{m\choose{r}}}\left({{m\choose{r}}}+1\right)}{2{{m-1\choose{r-1}}}}
=\frac{\frac{m}{r}\left({{m\choose{r}}}+1\right)}{2}\geq c m^{r+1},$$
for $m$ large enough and some constant $c$. This proves that the weight of the optimal edge-discriminator of $\mathcal K^r_m$ is within a constant factor of the upper bound, and completes the proof of Proposition \ref{th:r+1}.

\begin{remark}The upper bound on the weight of an edge-discriminator for a $r$-uniform hypergraph obtained in Proposition \ref{th:r+1} is often better than the general $|\mathcal E|(|\mathcal E|+1)/2=O(|\mathcal E|^2)$ upper bound proved in Theorem \ref{th:hypergraph_main}. In particular, when $|\mathcal E|\geq c|\mathcal V|^{\frac{r+1}{2}}$, then Proposition \ref{th:r+1} provides a better bound on the weight of an edge-discriminator. For example, if the $r$-uniform hypergraph is dense then Proposition \ref{th:r+1} gives a sharper upper bound. On the other hand, if we have a sparse $r$-uniform hypergraph, then Theorem \ref{th:hypergraph_main} gives a better upper bound of the weight of the edge-discriminator.
\end{remark}

\section{Lower Bound on the Weight of an Edge-Discriminator}
\label{sec:lowerbound}

In this section we prove a simple lower bound on the weight of an edge discriminator on $\mathcal H$. Given a hypergraph $\mathcal H=(\mathcal V, \mathcal E)$, subset $\mathcal E'\subseteq \mathcal E$ is said to be a {\it matching} if the elements in $\mathcal E'$ are mutually disjoint. A {\it maximum matching} of $\mathcal H$ is the the largest size matching in $\mathcal H$.

\begin{thm}For any edge-discriminator $\lambda$ on a hypergraph $\mathcal H=(\mathcal V, \mathcal E)$, with $|\mathcal E|=n$, $\omega_\lambda(\mathcal V)\geq \max\{n, \frac{\delta(\delta+1)}{2}\}$, where $\delta$ is the size of the maximum matching of $\mathcal H$. Moreover, there is a hypergraph with $n$ edges which attains this bound.
\label{th:lower_bound}
\end{thm}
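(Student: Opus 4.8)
The plan is to prove the lower bound $\omega_\lambda(\mathcal V)\geq \max\{n, \delta(\delta+1)/2\}$ in two separate pieces, since the maximum of two quantities is bounded below by each of them individually. The first piece, $\omega_\lambda(\mathcal V)\geq n$, should follow directly from the discriminating property: for any edge-discriminator $\lambda$, the $n$ edge-weights $\{\omega_\lambda(E_i): E_i\in \mathcal E\}$ are $n$ distinct positive integers, so their maximum is at least $n$. Since every vertex label is non-negative and each edge is a subset of $\mathcal V$, we have $\omega_\lambda(\mathcal V)\geq \omega_\lambda(E_i)$ for the edge achieving the maximum weight, giving $\omega_\lambda(\mathcal V)\geq n$.

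The second piece, $\omega_\lambda(\mathcal V)\geq \delta(\delta+1)/2$, is where the matching enters. First I would let $\mathcal M=\{F_1, F_2, \ldots, F_\delta\}$ be a maximum matching, so these $\delta$ edges are pairwise disjoint. The key observation is that, because the $F_j$ are mutually disjoint, their weights $\omega_\lambda(F_j)$ involve disjoint sets of vertex labels, and hence $\sum_{j=1}^\delta \omega_\lambda(F_j)\leq \omega_\lambda(\mathcal V)$. On the other hand, the $\delta$ values $\omega_\lambda(F_1), \ldots, \omega_\lambda(F_\delta)$ are distinct positive integers (they are a subset of the distinct positive edge-weights guaranteed by the edge-discriminator property), so their sum is minimized when they equal $1, 2, \ldots, \delta$, giving $\sum_{j=1}^\delta \omega_\lambda(F_j)\geq 1+2+\cdots+\delta = \delta(\delta+1)/2$. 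Chaining these two inequalities yields $\omega_\lambda(\mathcal V)\geq \delta(\delta+1)/2$, and combining with the first piece gives the stated maximum.

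For the attainment claim, I would exhibit a specific hypergraph meeting the bound. The natural candidate is a hypergraph whose maximum matching is large relative to $n$: taking $\mathcal H$ to consist of $n$ mutually disjoint edges (so $\delta=n$) realizes $\max\{n, n(n+1)/2\}=n(n+1)/2$, and by Theorem \ref{th:hypergraph_main} this is exactly $\omega_0(\mathcal H)$ for disjoint edges. Thus the disjoint-edge hypergraph attains the lower bound, since here the lower bound $n(n+1)/2$ coincides with the upper bound from Theorem \ref{th:hypergraph_main}. I would verify that for this family the two bounds match, so the optimal edge-discriminator weight equals $\max\{n,\delta(\delta+1)/2\}$ exactly.

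The steps here are all elementary, so there is no serious analytic obstacle; the only point requiring care is making sure the disjointness of the matching edges is used correctly to decouple the vertex-label sums, i.e. that $\sum_{j=1}^\delta \omega_\lambda(F_j)$ really is a sum over a collection of vertices each counted at most once and hence bounded by $\omega_\lambda(\mathcal V)$. I expect the mild subtlety to lie in the attainment statement rather than the inequality itself: one must choose the example so that the two terms in the maximum do not conflict, and confirm via Theorem \ref{th:hypergraph_main} that equality is genuinely achieved rather than merely bounded.
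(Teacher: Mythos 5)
Your proof of the inequality is essentially identical to the paper's: the bound $\omega_\lambda(\mathcal V)\geq n$ comes from the maximum of the $n$ distinct positive edge-weights, and the bound $\omega_\lambda(\mathcal V)\geq \delta(\delta+1)/2$ comes from summing the distinct positive weights over a maximum matching, using disjointness to see that this sum is at most $\omega_\lambda(\mathcal V)$. The only place you diverge is the attainment witness: the paper takes the nested hypergraph on $[n]$ with $E_i=[i]$, where $\delta=1$ and the all-ones labeling gives weight exactly $n=\max\{n,\delta(\delta+1)/2\}$, whereas you take $n$ pairwise disjoint edges, where $\delta=n$ and Theorem \ref{th:hypergraph_main} gives $\omega_0=n(n+1)/2=\max\{n,\delta(\delta+1)/2\}$. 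Both witnesses are valid (your appeal to Theorem \ref{th:hypergraph_main} is not circular, since the needed lower bound for the disjoint case is exactly the matching inequality you just proved); the paper's choice has the mild advantage of exhibiting tightness at the other extreme of the maximum, where the matching term is vacuous and the bound $n$ itself is the binding constraint.
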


\begin{proof}Observe that the weights $\omega_\lambda(E_1), \omega_\lambda( E_2), \ldots, \omega_\lambda( E_n)$ are all positive and distinct. This implies that
\begin{equation}
\omega_\lambda(\mathcal V)\geq \max_{ E_i\in \bbE}{\omega_\lambda( E_i)}\geq n.
\label{eq:lowerbound_I}
\end{equation}

Next, suppose that $ \mathcal E'=\{ E_{i_1},  E_{i_2}, \ldots,  E_{i_{\delta}}\}$ is a matching of $\mathcal H$. Since the hyperedges $ E_{i_1},  E_{i_2}, \ldots,  E_{i_{\delta}}$ are mutually disjoint and the corresponding weights $\omega_\lambda( E_{i_1}), \omega_\lambda( E_{i_2}), \ldots, \omega_\lambda( E_{i_{\delta}})$ are distinct, we have,
\begin{equation}
\omega_\lambda(\mathcal V)\geq \sum_{j=1}^{\delta}\omega_\lambda( E_{i_j})\geq \frac{\delta(\delta+1)}{2}.
\label{eq:lowerbound_II}
\end{equation}
The result now follows by combining Equation (\ref{eq:lowerbound_I}) and Equation (\ref{eq:lowerbound_II}). 

Finally, consider the hypergraph $\mathcal H_O=(\bbV_O, \bbE_O)$, with $\bbV_O=[n]$ and $\bbE_O=\{E_1, E_2, \ldots, E_n\}$, where $E_i=[i]$. Then it is easy to see that the function $\lambda_O:\mathcal V_O \rightarrow \mathbb Z^+\cup \{0\}$ defined by $\lambda_O(i)=1$, for all $i \in [n]$, is the optimal edge-discrimiator for $\mathcal H_O$ and 
$\omega_0(\mathcal H_O)=\omega_{\lambda_O}(\mathcal V)=n$ attains the lower bound.
\end{proof}

\begin{remark}
A hypergraph is called $d$-{\it regular} if every vertex is in $d$ hyperedges. Let $\mathcal H=(\mathcal V, \mathcal E)$ be a $d$-regular hypergraph with $|\mathcal E|=n$. Then any edge discriminator $\lambda$ on $\mathcal H$ satisfies $\omega_\lambda(\mathcal V)=\sum_{v\in \mathcal V}\lambda(v)=\frac{1}{d}\sum_{E\in \mathcal E}\omega_\lambda(E)\geq \frac{n(n+1)}{2d}=O(n^2/d)$. We shall see later that in many $d$-regular hypergraphs the weight of the optimal edge-discriminator is within a constant factor of this lower bound. In fact, if $d=O(1)$, that is, the hypergraph is sparse, then this lower bound has the same of order of magnitude as the $O(n^2)$ upper bound proved in Theorem \ref{th:hypergraph_main}.
\end{remark}

\section{Optimal Edge-Discriminators in Special Hypergraphs}
\label{sec:computation}

In this section we shall compute the optimal edge discriminators for some special hypergraphs. The section is divided into four subsections where we compute the optimal edge-discriminator for the power set hypergraph, paths, cycles and the complete $r$-partite hypergraph, respectively.

\subsection{Power Set Hypergraph}
\label{sec:power_set}

The power set hypergraph on a set $\mathcal V$, with $|\mathcal V|=m$, is the hypergraph $(\mathcal V, 2^\mathcal V)$, where $2^\mathcal V$ denotes the power set of $\mathcal V$, that is, the set of all non-empty subsets of $\mathcal V$. We denote the power set hypergraph on a set of $m$ elements by $Power(m)$.

\begin{thm}$\omega_0(Power(m))=2^m-1$.
\label{th:power_set}
\end{thm}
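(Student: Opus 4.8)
The plan is to recognize that, since the hyperedges of $Power(m)$ are exactly the nonempty subsets of $\mathcal V$, an edge-discriminator $\lambda$ is nothing but a labeling whose values form a sum-distinct set of positive integers. First I would record the reduction precisely: applying the positivity requirement $\omega_\lambda(E) > 0$ to the singleton edges $\{v\}$ forces $\lambda(v) \geq 1$ for every $v \in \mathcal V$, and applying the discriminating requirement to singletons forces the values $\lambda(v)$ to be pairwise distinct. Thus computing $\omega_0(Power(m))$ is exactly the problem of minimizing $\sum_{v \in \mathcal V}\lambda(v)$ over all sets of $m$ positive integers whose $2^m-1$ nonempty subset sums are mutually distinct.

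For the upper bound, I would simply exhibit the labeling $\lambda(v_i) = 2^{i-1}$ for $i = 1, \ldots, m$. The nonempty subset sums of $\{1, 2, 4, \ldots, 2^{m-1}\}$ are precisely the integers $1, 2, \ldots, 2^m - 1$, read off from their binary expansions, hence distinct and positive; so $\lambda$ is a valid edge-discriminator. Its weight is $\sum_{i=1}^m 2^{i-1} = 2^m - 1$, which gives $\omega_0(Power(m)) \leq 2^m - 1$.

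For the matching lower bound, which is the heart of the statement, I would argue by a direct counting/pigeonhole estimate. For an arbitrary edge-discriminator $\lambda$, the collection of subset sums $\{\omega_\lambda(E) : E \in 2^{\mathcal V}\}$ consists of $2^m - 1$ distinct positive integers. Because every label is positive, the largest of these sums is attained by the full set, i.e.\ $\max_{E} \omega_\lambda(E) = \omega_\lambda(\mathcal V)$. Since any set of $2^m - 1$ distinct positive integers necessarily has maximum element at least $2^m - 1$, we conclude $\omega_\lambda(\mathcal V) \geq 2^m - 1$. Taking the minimum over all edge-discriminators and combining with the construction above yields $\omega_0(Power(m)) = 2^m - 1$.

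There is essentially no serious obstacle here; the only points requiring care are the exactness of the reduction to sum-distinct sets and the observation that positivity of every label guarantees that the total weight $\omega_\lambda(\mathcal V)$ is itself the maximum subset sum (rather than merely an upper bound). It is worth emphasizing the contrast with the introduction: whereas minimizing the \emph{largest} element of a sum-distinct set is the notoriously hard Erd\H{o}s problem governed by $w(m)$, minimizing the \emph{total} weight is settled cleanly by this trivial counting bound, which is tight precisely because powers of $2$ realize every value in $\{1, \ldots, 2^m-1\}$.
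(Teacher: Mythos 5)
Your proposal is correct and follows essentially the same route as the paper: the upper bound via the labeling $\lambda(v_i)=2^{i-1}$, and the lower bound from the observation that the $2^m-1$ edge weights are distinct positive integers, so the largest is at least $2^m-1$ and is dominated by $\omega_\lambda(\mathcal V)$ (the paper obtains this by citing its general lower bound $\omega_\lambda(\mathcal V)\geq n$ from Theorem \ref{th:lower_bound}, whereas you re-derive it inline). The extra reduction to sum-distinct sets and the positivity of individual labels are harmless but not needed, since nonnegativity of the labels already gives $\omega_\lambda(\mathcal V)\geq\omega_\lambda(E)$ for every hyperedge $E$.
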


\begin{proof}Let $\mathcal V=\{v_1, v_2, \ldots, v_m\}$. As $|2^\mathcal V|=2^m-1$, by Theorem \ref{th:lower_bound} it suffices to construct an edge-discriminator $\lambda$ on $(\mathcal V, 2^\mathcal V)$ such that $\omega_\lambda((\mathcal V, 2^\mathcal V))=2^m-1$.

Define $\lambda: \mathcal V \rightarrow \mathbb Z^+\cup \{0\}$ as: $\lambda(v_i)=2^{i-1}$, for $i\in [m]$. For any subset $\mathcal W=\{v_{i_1}, v_{i_2}, \ldots, v_{i_k}\}$, with $1\leq i_1 < i_2 < \ldots < i_k \leq m$, we have $\omega_\lambda(\mathcal W)=\sum_{i=1}^m q_i(\mathcal W)2^{i-1}$, where $q_i(\mathcal W)$ is 1 or 0 depending on whether $i\in \{i_1, i_2, \ldots, i_k\}$ or not. As for any two distinct subsets $\mathcal W$ and $\mathcal Y$ the $m$-tuples $(q_1(\mathcal W), q_2(\mathcal W), \ldots, q_m(\mathcal W))$ and $(q_1(\mathcal Y), q_2(\mathcal Y), \ldots, q_m(\mathcal Y))$ are distinct, we have $\omega_\lambda(\mathcal W)\ne \omega_\lambda(\mathcal Y)$. Therefore, $\lambda$ is an edge-discriminator with $\omega_\lambda(\mathcal V, 2^\mathcal V))=\sum_{i=1}^m 2^{i-1}=2^m-1$, and the result follows. 
\end{proof}

\subsection{Paths: Proof of Theorem \ref{th:path}}
\label{subsection:path}

Let $P_m=(V, E)$ be the path on $m$ vertices. Suppose that $V=\{v_1, v_2, \ldots, v_m\}$, where the vertices are taken in order from left to right. The vertices $v_1$ and $v_m$ are called the end-vertices of the path $P_m$. Denote by $e_i$ the edge $(v_i, v_{i+1})$, for $i \in [m-1]$.

Suppose $\lambda$ is an edge-discriminator on $P_m$. Then as each of the vertices $v_2, v_3, \ldots, v_{m-1}$ are incident on 2 edges, we get
$$\sum_{i=1}^{m-1}\omega_\lambda(e_i)=\lambda(v_1)+\lambda(v_m)+2\sum_{i=2}^{m-1}\lambda(v_i).$$
As $\omega_\lambda(e_i)$ is distinct for all $i \in [m-1]$, therefore,
\begin{equation}
\omega_\lambda(P_m)=\sum_{i=1}^n \lambda(v_i)\geq \frac{\lambda(v_1)+\lambda(v_m)}{2}+\sum_{i=2}^{m-1}\lambda(v_i)=\frac{1}{2}\cdot\sum_{i=1}^{m-1}\omega_\lambda(e_i)\geq \frac{m(m-1)}{4}.
\label{eq:path}
\end{equation}

\begin{figure*}[h]
\centering
\begin{minipage}[c]{1.0\textwidth}
\centering
\includegraphics[width=5.25in]
    {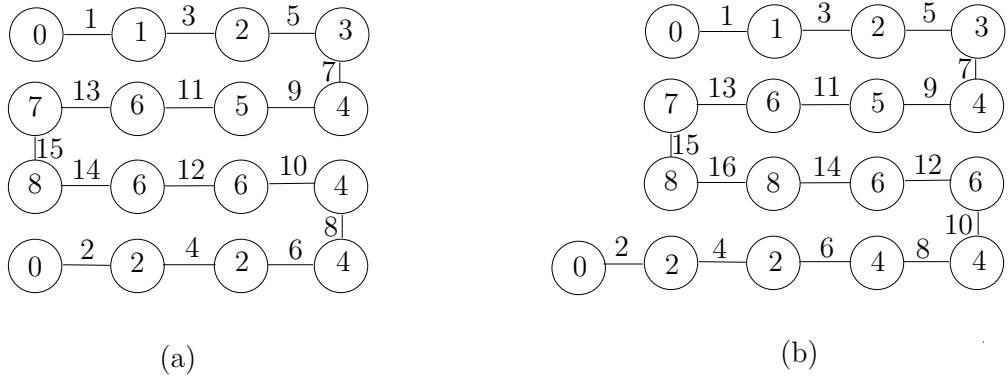}\\
\end{minipage}%
\caption{Optimal edge-discrimination for $P_m$: (a) $m$ is a multiple of 4, and (b) $m$ is 1 modulo 4.}
\label{fig:path_m_1}
\end{figure*}

We now have the following two cases:

\begin{description}
\item[{\it Case} 1:]$m=4k$ or $m=4k+1$ for some integer $k \geq 1$. In this case $m(m-1)/4$ is an integer, and from Equation (\ref{eq:path}) any edge-discriminator of $P_m$ with weight $m(m-1)/4$ will  be optimal. Note that equality holds in Equation (\ref{eq:path}) if it is possible to find an edge-discriminator $\lambda'$ on $P_m$ such that $\lambda'(v_1)=\lambda'(v_m)=0$ and the set $\{\omega_{\lambda'}(e_i): 1\leq i \leq m-1\}$ takes all values in $[m-1]$.
\begin{description}
\item[{\it Case} 1.1:]$m=4k$. Consider the function $\lambda': V\rightarrow \mathbb Z^+\cup\{0\}$ as follows:
$$\begin{array}{ll}
    \lambda'(v_i)=i-1, & \hbox{for}~i\in [m/2+1]; \\
    \lambda'(v_{2i+1})=\lambda'(v_{2i})=m-2i, & \hbox{for}~ i\in [m/4+1, m/2-1];  \\
    \lambda'(v_m)=0. &
  \end{array}$$
Then $\lambda'(v_1)=\lambda'(v_m)=0$. Now, for $e_i=(v_i, v_{i+1})$, with $i\leq m/2$, we have $\omega_{\lambda'}(e_i)=2i-1$, which implies that $F=\{\omega_{\lambda'}(e_i)| i \in [m/2]\}=\{2i-1|i \in [m/2]\}$ is the set of all odd numbers less than $m$. Note that $\omega_{\lambda'}(e_{m/2+1})=m-2$, and
for $i\geq m/2+2$, $B=\{\omega_{\lambda'}(e_i)| i \in [m/2+1, m-1]\}$ is the set of all even numbers less than $m-2$. Therefore, the set $\{\omega_{\lambda'}(e_i): i \in [m-1]\}$ takes all values in $[m-1]$. This implies that $\lambda'=\lambda_{P_m}$ and $\omega_0(P_m)=\omega_{\lambda'}(P_m)=m(m-1)/4$. The construction of the optimal edge discriminator for a path with 16 vertices is shown in Figure \ref{fig:path_m_1}(a).

\item[{\it Case} 1.2:]$m=4k+1$. In this case consider the function $\lambda': V\rightarrow \mathbb Z^+\cup\{0\}$ as follows:
$$\begin{array}{ll}
    \lambda'(v_i)=i-1, & \hbox{for}~i\in [(m-1)/2]; \\
    \lambda'(v_{2i+1})=\lambda'(v_{2i})=m-2i, & \hbox{for}~ i\in \{\frac{m+1}{4}+j| 0\leq j \leq \frac{m-5}{4}\};  \\
    \lambda'(v_m)=0. &
  \end{array}$$
Now, from arguments exactly similar to the previous case it follows, $\omega_{\lambda_{P_m}}(P_m)=\omega_{\lambda'}(P_m)=n(n-1)/4$. The construction of the optimal edge discriminator for a path with 17 vertices is shown in Figure \ref{fig:path_m_1}(b).
\end{description}

\item[{\it Case} 2:]$m=4k+2$ or $m=4k+3$ for some positive integer $k$. In this case $m(m-1)/4$ is not an integer. Suppose that there exists an edge-discriminator $\lambda'$ on $P_m$ such that $\{\omega_{\lambda'}(e_i): 1\leq i \leq m-1\}$ takes all the values in $[m-1]$. Then, from Equation (\ref{eq:path}) we get
    $$\omega_\lambda'(P_m)\geq
\frac{\lambda'(v_1)+\lambda'(v_m)}{2}+\sum_{i=2}^{m-1}\lambda'(v_i)=\frac{m(m-1)}{4}.$$
    Now, as $\omega_\lambda'(P_m)$ is an integer and is $m(m-1)/4$ is not an integer, we must have $\omega_\lambda'(P_m)\geq m(m-1)/4+1/2=\lceil m(m-1)/4\rceil$ and equality holds if we have $\lambda'(v_1)=0$ and $\lambda'(v_m)=1$.
\begin{description}
\item[{\it Case} 2.1:]$m=4k+2$. Consider the function $\lambda': V\rightarrow \mathbb Z^+\cup\{0\}$ as follows:
$$\begin{array}{ll}
    \lambda'(v_i)=i-1, & \hbox{for}~i\in [m/2+1]; \\
    \lambda'(v_{2i-1})=\lambda(v_{2i})=m-2i+1, & \hbox{for}~ i\in [m/4+3/2, m/2].
  \end{array}$$
It is easy to see that $\lambda'(v_1)=0$ and $\lambda'(v_m)=1$, and $\{\omega_{\lambda'}(e_i): 1\leq i \leq m-1\}$ takes all the values in $[n-1]$. Therefore, $\omega_0(P_m)=\omega_{\lambda'}(P_m)=m(m-1)/4+1/2=\lceil m(m-1)/4\rceil$. The construction of the optimal edge discriminator for a path with 18 vertices is shown in Figure \ref{fig:path_m_2}(a).

\begin{figure*}[h]
\centering
\begin{minipage}[c]{1.0\textwidth}
\centering
\includegraphics[width=5.25in]
    {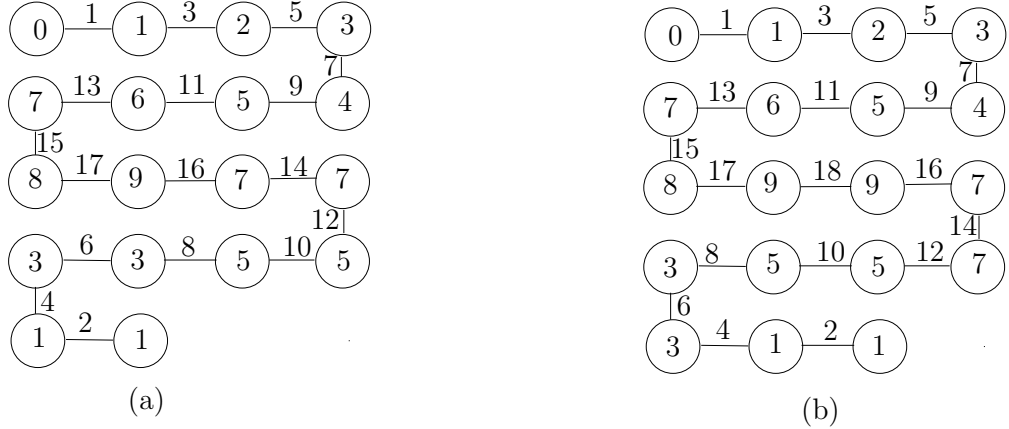}\\
\end{minipage}%
\caption{Optimal edge-discrimination for $P_m$: (a) $m$ is 2 modulo 4, and (b) $m$ is 3 modulo 4.}
\label{fig:path_m_2}
\end{figure*}

\item[{\it Case} 2.2:]$m=4k+3$. In this case, the optimal edge-discriminator function $\lambda': V\rightarrow \mathbb Z^+\cup\{0\}$ is:
$$\begin{array}{ll}
    \lambda'(v_i)=i-1, & \hbox{for}~i\in [(m-1)/2]; \\
    \lambda'(v_{2i-1})=\lambda(v_{2i})=m-2i+1, & \hbox{for}~ i\in \{\frac{m+3}{4}+j| 0 \leq j \leq \frac{m-3}{4}\}.
  \end{array}$$
The construction of the optimal edge discriminator for a path with 19 vertices is shown in Figure \ref{fig:path_m_2}(b).

\end{description}
\end{description}

\subsection{Cycles: Proof of Theorem \ref{th:cycle}}

Let $C_m=(V, E)$ be the cycle on $m$ vertices. Suppose that $V=\{v_1, v_2, \ldots, v_m\}$, with the vertices taken in the clockwise order. Denote by $e_i$ the edge $(v_i, v_{i+1})$, for $i \in [m]$. Suppose $\lambda$ is an edge-discriminator on $C_m$. As every vertex is incident to exactly two edges, we get $\sum_{i=1}^{m}\omega_\lambda(e_i)=2\sum_{i=1}^{m}\lambda(v_i)$.
Since $\omega_\lambda(e_i)$ is distinct for all $i \in  [m]$,
\begin{equation}
\omega_\lambda(C_m)=\sum_{i=1}^m \lambda(v_i) = \frac{1}{2}\cdot\sum_{i=1}^{m}\omega_\lambda(e_i)\geq \frac{m(m+1)}{4}.
\label{eq:cycle}
\end{equation}

Before we find the optimal edge-discriminator of $C_m$ we prove the following lemma:

\begin{lem}Suppose $P_m=(V, E)$ be a path, where $m$ is either 0 or 1 modulo 4. Let $V=\{v_1, v_2, \ldots, v_m\}$, with the vertices taken from left to right, and $e_i$ be the edge $(v_i, v_{i+1})$. Then there exists an edge discriminator $\lambda$ on $P_m$ such that $\lambda(v_1)=0$, $\lambda(v_m)=2$, and the set $\{\omega_\lambda(e_i)| 1\leq i \leq m-1\}$ takes all the values in $[m-1]$, and $\omega_{\lambda}(P_m)=m(m-1)/4+1$.
\label{lm:path02}
\end{lem}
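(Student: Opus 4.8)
The plan is to reduce the statement to a purely combinatorial construction and then exhibit that construction by adapting the ``folded'' labelings built in the proof of Theorem \ref{th:path}. First I observe that the weight condition is automatic: exactly as in the derivation of Equation (\ref{eq:path}), every interior vertex $v_2,\dots,v_{m-1}$ lies on two edges while $v_1,v_m$ lie on one, so $\sum_{i=1}^{m-1}\omega_\lambda(e_i)=\lambda(v_1)+\lambda(v_m)+2\sum_{i=2}^{m-1}\lambda(v_i)$. Hence if $\lambda(v_1)=0$, $\lambda(v_m)=2$ and the edge weights are exactly $[m-1]$ (so that $\sum_i\omega_\lambda(e_i)=m(m-1)/2$), then $\omega_\lambda(P_m)=\lambda(v_1)+\lambda(v_m)+\sum_{i=2}^{m-1}\lambda(v_i)=2+\tfrac12\big(m(m-1)/2-2\big)=m(m-1)/4+1$. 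So it suffices to construct an edge-discriminator with $\lambda(v_1)=0$ and $\lambda(v_m)=2$ whose edge weights run over all of $[m-1]$.

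Second, it is convenient to record the remaining constraint via telescoping. Writing $b_i=\omega_\lambda(e_i)=\lambda(v_i)+\lambda(v_{i+1})$ and using $\lambda(v_1)=0$, one gets $\lambda(v_{i+1})=\sum_{j=1}^{i}(-1)^{i-j}b_j$; in particular $\lambda(v_m)$ equals the full alternating sum $\sum_{j=1}^{m-1}(-1)^{m-1-j}b_j$. Thus the task becomes: order the set $[m-1]$ as a sequence $(b_1,\dots,b_{m-1})$ whose alternating sum equals $2$ and all of whose partial alternating sums $\lambda(v_{i+1})$ are non-negative.

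I would produce such an ordering by perturbing the two labelings from Case~1 of Theorem \ref{th:path}, which already realize $[m-1]$ as the edge weights but with $\lambda(v_m)=0$ (equivalently, alternating sum $0$). Those labelings consist of an ascending arm contributing the odd elements of $[m-1]$ and a descending arm contributing the even ones. I would keep the ascending arm intact and adjust the descending arm near $v_m$ so that its terminal value becomes $2$. In the alternating-sum picture this amounts to transposing two edge-labels of consecutive values, one occupying an odd and the other an even position of the sequence, which shifts the total alternating sum by exactly $+2$ while leaving the multiset of edge-labels equal to $[m-1]$. The residues $m\equiv 0$ and $m\equiv 1\pmod 4$ must be treated separately, since the parity of $m-1$ (and hence the shape of the two arms) differs; for each I would write down the resulting closed-form labeling. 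For instance, when $m\equiv 0\pmod 4$ the ordering $b=(1,2,4,6,\dots,m-2,\,m-1,\,m-3,\dots,3)$ already has alternating sum $2$, while the case $m\equiv 1\pmod 4$ requires the extra transposition just described.

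The main obstacle is the non-negativity bookkeeping. Raising $\lambda(v_m)$ from $0$ to $2$ cannot be achieved by a purely local change at the last vertex: forcing the net weight increment to be exactly $+1$ makes the newly created interior edge-sum have the wrong parity to fit into the ``even arm,'' so the correction must instead be spread across the fold via the transposition above. After that transposition one must verify that every partial alternating sum $\lambda(v_{i+1})=\sum_{j\le i}(-1)^{i-j}b_j$ stays $\ge 0$, and this is the only point where the explicit structure of the two arms is genuinely used. Once non-negativity and the bijection of edge-weights onto $[m-1]$ are checked for both residue classes, the weight identity of the first paragraph gives $\omega_\lambda(P_m)=m(m-1)/4+1$, completing the proof.
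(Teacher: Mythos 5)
Your reduction is correct and cleanly stated: the weight identity $\omega_\lambda(P_m)=m(m-1)/4+1$ does follow automatically from $\lambda(v_1)=0$, $\lambda(v_m)=2$ and the edge weights being exactly $[m-1]$, and the telescoping reformulation (order $[m-1]$ as $(b_1,\dots,b_{m-1})$ with all partial alternating sums non-negative and total alternating sum $2$) is a faithful restatement of what must be built. Your explicit ordering for $m\equiv 0\pmod 4$ also checks out (e.g.\ for $m=8$ it gives labels $0,1,1,3,3,4,1,2$ with edge weights $1,2,4,6,7,5,3$). But the proof is not complete: for $m\equiv 1\pmod 4$ you never exhibit the labeling, only gesture at ``the extra transposition just described,'' which is itself specified only abstractly (transpose two consecutive values occupying positions of opposite parity) without saying which two values or where; and the non-negativity of all partial alternating sums --- which you yourself identify as ``the only point where the explicit structure of the two arms is genuinely used'' --- is asserted to need verification but is never verified for either residue class. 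Since the entire content of the lemma is the existence of such a labeling, these are exactly the steps that cannot be left as promissory notes.

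For comparison, the paper avoids all of this bookkeeping by a four-vertex extension rather than a global perturbation: after handling $m=4$ and $m=5$ by hand, it takes the already-constructed discriminator $\lambda'$ on $P_{m-4}$ from Case~1 of Theorem~\ref{th:path} (both endpoints labeled $0$, edge weights exactly $[m-5]$) and appends four vertices labeled $m-4$, $1$, $m-3$, $2$. The four new edges then receive weights $m-4$, $m-3$, $m-2$, $m-1$, so the edge weights become all of $[m-1]$, the last label is $2$, and the weight is $(m-5)(m-4)/4+2m-4=m(m-1)/4+1$. This makes both the bijection onto $[m-1]$ and non-negativity immediate, with no case split beyond the two small base cases. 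If you want to salvage your route, you need to write down the $m\equiv 1\pmod 4$ ordering explicitly and prove non-negativity of the partial sums in closed form for both residues; alternatively, grafting your correct first paragraph onto an extension argument of the paper's type would close the gap with far less work.
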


\begin{proof}
Assume that $m=4k$ or $4k+1$ for some integer $k>0$. For the case $k=1$, $k$ is either 4 or 5 and in each of these cases we can construct an edge-discriminator $\lambda$ as follows:

\begin{description}
\item[{\it Case} 1:]$m=4$. Define $\lambda(v_1)=0$, $\lambda(v_2)=\lambda(v_3)=1$ and $\lambda(v_3)=2$. In this case $\omega_\lambda(P_m)=4$ and $\lambda$ satisfies the required properties.

\item[{\it Case} 2:]$m=5$. Define $\lambda(v_1)=0$, $\lambda(v_2)=\lambda(v_3)=1$ and $\lambda(v_3)=\lambda(v_4)=2$. Then $\omega_\lambda(P_m)=6$ and $\lambda$ is an edge-discriminator satisfying the required properties.
\end{description}
Therefore, suppose $k\geq 2$, which implies that $m\geq 8$. Let $V'=V\backslash\{v_{m-3}, v_{m-2}, v_{m-1}, v_m\}$ and $E'=E\backslash\{e_{m-3}, e_{m-2}, e_{m-1}\}$. From {\it Case} 1 of the Section \ref{subsection:path} we know that there is edge-discriminator $\lambda'$ on $P_{m-4}=(V', E')$ such that $\lambda'(v_1)=\lambda'(v_{m-4})=0$, the set $\{\omega_{\lambda'}(e_i)| 1\leq i \leq m-5\}$ takes all the values in $[m-5]$, and $\omega_{\lambda'}(P_m)=\frac{(m-5)(m-4)}{4}$. We now extend $\lambda'$ to an edge-discriminator $\lambda$ on $P_m=(V, E)$ as follows:
$$\lambda(v_i)=\left\{
\begin{array}{lll}
    \lambda'(v_i), & \hbox{if} & i\in [m-4]; \\
    m-4, & \hbox{if} & i=m-3;\\
    1, & \hbox{if} & i=m-2;\\
    m-3, & \hbox{if} & i=m-1;\\
    2, & \hbox{if} & i=m.
  \end{array}\right.$$

Clearly, $\lambda$ is an edge-discriminator on $P_m$ with edge weights in $[m-1]$ and
$\omega_\lambda(P_m)=\frac{(m-5)(m-4)}{2}+2m-4=\frac{m(m-1)}{4}+1$. 
\end{proof}

Using the above lemma we now construct an optimal edge-discriminator for $C_m$. Consider the following two cases:
\begin{description}
\item[{\it Case} 1:]$m$ is 0 or 3 modulo 4. In this case $m(m+1)/4$ is an integer. Therefore, equality holds in Equation (\ref{eq:cycle}) if it is possible to find an edge-discriminator $\lambda$ on $C_m$ such that the set $\{\omega_{\lambda}(e_i): 1\leq i \leq m\}$ takes all values in $[m]$. Note that $m+1$ is either 0 or 1 modulo 4, and by {\it Case} 1 of Section \ref{subsection:path}, we know that there exists an edge discriminator $\lambda'$ on $P_{m+1}$ such that the weights of the $m$ edges of $P_{m+1}$ takes all the values in $[m]$. Moreover, as $\lambda'$ assigns value 0 to the two end vertices of $P_{m+1}$ amalgamating the two end vertices of $P_{m+1}$ we get an edge-discriminator $\lambda$ on $C_m$ such that $\{\omega_{\lambda}(e_i): 1\leq i \leq m\}$ takes all values in $[m]$. This proves that $\lambda=\lambda_{C_m}$ and $\omega_0(C_m)=m(m+1)/4$.

\item[{\it Case} 2:]$m$ is 1 or 2 modulo 4. In this case $m(m+1)/4$ is not an integer and so
$\omega_\lambda(C_m)\geq \frac{m(m+1)}{4}+\frac{1}{2}$. Consider the path $P_{m-1}=(V', E')$, $V'=\{v_1, v_2, \ldots, v_{m-1}\}$ with the vertices taken from left to right. Since $m-1$ is 0 or 1 modulo 4, by Lemma \ref{lm:path02} there exists an edge-discriminator $\lambda'$  on $P_{m-1}$ such that $\lambda'(v_1)=0$, $\lambda'(v_{m-1})=2$, and the set $\{\omega_{\lambda'}(e_i)| 1\leq i \leq m-2\}$ takes all the values in $[m-2]$, and $\omega_{\lambda'}(P_{m-1})=\frac{(m-2)(m-1)}{4}+1$. Now, we extend $\lambda'$ to an edge discriminator $\lambda''$ on $P_{m+1}=(V, E)$, with $V=\{v_1, v_2, \ldots, v_{m+1}\}$ as follows:
$$\lambda''(v_i)=\left\{
\begin{array}{lll}
    \lambda'(v_i), & \hbox{if} & i\in [m-1]; \\
    m-1, & \hbox{if} & i=m;\\
    0, & \hbox{if} & i=m+1.\\
\end{array}\right.$$
Note, $\lambda''$ assigns value 0 to the two end vertices of $P_{m+1}$. Thus, amalgamating the two end vertices of $P_{m+1}$ we get an edge-discriminator $\lambda$ on $C_m$ such that $\omega_{\lambda}(C_m)=\frac{(m-2)(m-1)}{4} + 1 + (m-1)=\frac{m(m+1)}{4}+\frac{1}{2}$. This implies that $\lambda=\lambda_{C_m}$ and the result follows.
\end{description}

\subsection{Complete $r$-Partite Hypergraph: Proof of Theorem \ref{th:r-partite}}

In this section we shall construct the optimal edge-discriminator for complete $r$-partite hypergraph $\mathcal H_r(\pmb a)=(\mathcal V_r, \bbE_r)$, where $\pmb a=(m_1, m_2, \ldots, m_r)$. Let $\lambda: \mathcal V_r \rightarrow \mathbb Z^+\cup \{0\}$ be an edge discriminator for the hypergraph $\mathcal H_r(\pmb a)$. Define $\omega_\lambda(A_i)=\sum_{v\in A_i}\lambda(v)$, for $i \in [r]$. For $q\leq  r$, the hypergraph with vertex set $\mathcal V_q=\bigcup_{i=1}^q A_{i}$ and hyperedge-set $\bbE_q=A_{1}\times A_{2}\times \ldots \times A_{q}$ is to be denoted by $\mathcal H_r^q(\pmb a)$. Let $\lambda_q$ denote the restriction of $\lambda$ to $\mathcal V_q$, define $\omega_\lambda(\bbE_q):=\sum_{e\in  \bbE_q}\omega_{\lambda_q}(e)$ for $q\leq r$.

Note that a vertex $v\in A_i$, $i\in [r]$, belongs to $m_{(r)}/m_i$ many hyperedges, where $m_{(q)}=\prod_{k=1}^q m_k$ for any positive integer $q\leq r$. As $| \bbE_r|=m_{(r)}$, we have the following equality,
\begin{equation}
\omega_\lambda(\bbE_r)=\sum_{e\in  \bbE_r}\omega_\lambda(e)=\sum_{i=1}^r\frac{m_{(r)}}{m_i}\sum_{v\in A_i}\lambda(v)=\sum_{i=1}^r\frac{m_{(r)}}{m_i}\omega_\lambda(A_i).
\end{equation}
This implies that
\begin{eqnarray}
\sum_{i=1}^r\omega_\lambda(A_i)&=&\omega_{\lambda}(A_r)+\sum_{i=1}^{r-1}\left(\frac{m_r}{m_i}\right)\omega_\lambda(A_i)+
\sum_{i=1}^{r-1}\left(1-\frac{m_r}{m_i}\right)\omega_\lambda(A_i)\nonumber\\
&=&\left(\frac{m_r}{m_{(r)}}\right)\omega_\lambda(\bbE_r)+
\sum_{i=1}^{r-1}\left(1-\frac{m_r}{m_i}\right)\omega_\lambda(A_i).
\label{eqn:r-partite1}
\end{eqnarray}
Consider the restriction of $\lambda_{r-1}$ of $\lambda$ to $\mathcal H_r^{r-1}(\pmb a)=(\bbV_{r-1}, \bbE_{r-1})$. Note that vertex $v\in A_i$, $i\in [r-1]$, belongs to $m_{(r-1)}/m_i$ many hyperedges in the hypergraph $\mathcal H_r^{r-1}(\pmb a)$. Thus, we have
\begin{equation}
\omega_{\lambda}(\bbE_{r-1}):=\sum_{e\in  \bbE_{r-1}}\omega_{\lambda_{r-1}}(e)=\sum_{i=1}^{r-1}\frac{m_{(r-1)}}{m_i}\sum_{v\in A_i}\lambda(v).
\label{eqn:r-1}
\end{equation}
Therefore,
\begin{eqnarray}
\sum_{i=1}^{r-1}\left(\frac{1-\frac{m_r}{m_i}}{1-\frac{m_r}{m_{r-1}}}\right)\omega_\lambda(A_i)
&=&\omega_{\lambda}(A_{r-1})+\sum_{i=1}^{r-2}\left(\frac{m_{r-1}}{m_{i}}\right)\omega_\lambda(A_i)
+\sum_{i=1}^{r-2}\left(\frac{1-\frac{m_r}{m_i}}{1-\frac{m_r}{m_{r-1}}}-\frac{m_{r-1}}{m_{i}}\right)\omega_\lambda(A_i)\nonumber\\
&=&\left(\frac{m_{r-1}}{m_{(r-1)}}\right)\omega_\lambda(\bbE_{r-1}) +\sum_{i=1}^{r-2}\left(\frac{m_{r-1}(m_i-m_{r-1})}{m_i(m_{r-1}-m_r)}\right)\omega_\lambda(A_i)
\label{eqn:middle}
\end{eqnarray}
where the last equality follows from Equation (\ref{eqn:r-1}). Multiplying both sides of Equation (\ref{eqn:middle}) by $1-\frac{m_r}{m_{r-1}}$ we get
\begin{eqnarray}
\sum_{i=1}^{r-1}\left(1-\frac{m_r}{m_i}\right)\omega_\lambda(A_i)
=\left(\frac{m_{r-1}-m_r}{m_{(r-1)}}\right)\omega_\lambda(\bbE_{r-1}) +\sum_{i=1}^{r-2}
\left(1- \frac{m_{r-1}}{m_i}\right)\omega_\lambda(A_i).
\label{eqn:r-partite2}
\end{eqnarray}
Now, substituting Equation (\ref{eqn:r-partite2}) in Equation (\ref{eqn:r-partite1}) we get,
\begin{eqnarray}
\sum_{i=1}^r\omega_\lambda(A_i)=\left(\frac{m_r}{m_{(r)}}\right)\omega_\lambda(\mathcal E)+\left(\frac{m_{r-1}-m_r}{m_{(r-1)}}\right)\omega_\lambda(\bbE_{r-1}) +\sum_{i=1}^{r-2}
\left(1- \frac{m_{r-1}}{m_i}\right)\omega_\lambda(A_i).
\end{eqnarray}
Proceeding in this way we ultimately get,
\begin{eqnarray}
\sum_{i=1}^r\omega_\lambda(A_i)&=&\left(\frac{m_r}{m_{(r)}}\right)\omega_\lambda(\mathcal E)+\sum_{q=2}^r \left(\frac{m_{q-1}-m_q}{m_{(q-1)}}\right)\omega_\lambda(\bbE_{q-1})
\label{eqn:final}
\end{eqnarray}
Note that the weights $\omega_{\lambda_q}(e)$ for $e\in \bbE_q$ are all distinct, for $q < r$. Therefore, $\{\omega_{\lambda_q}(e)|e\in \bbE_q\}$, consists of $m_{(q)}$ non-negative distinct values for $q < r$. This implies that $\omega_\lambda(\bbE_q)=\sum_{e\in \bbE_q}\omega_{\lambda_q}(e)\geq m_{(q)}(m_{(q)}-1)/2$. Moreover, as $\lambda$ is an edge-discriminator for $\mathcal H_r(\pmb a)$, $\{\omega_\lambda(e)|e\in \mathcal E\}$, consists of $m_{(r)}$ distinct positive values. This implies that $\omega_\lambda(\bbE)=\sum_{e\in \bbE}\lambda(e)\geq m_{(r)}(m_{(r)}+1)/2$. Thus, from Equation (\ref{eqn:final}) we get
\begin{eqnarray}
\sum_{i=1}^r\omega_\lambda(A_i)&\geq&\frac{1}{2}\left[m_r(m_{(r)}+1)+\sum_{q=2}^{r}(m_{q-1}-m_q)(m_{(q-1)}-1)\right]\nonumber\\
&=&m_r+\sum_{q=1}^r\frac{m_{(q)}(m_q-1)}{2}.
\label{eqn:partite}
\end{eqnarray}

We shall now construct an edge discriminator $\lambda_{\mathrm{OPT}}$ on $\mathcal H_r(\pmb a)$ whose weight attains the above bound. Denote the elements of $\bigcup_{i=1}^r A_i$ as follows: $A_i=\{v_{ij}| j\in[m_i]\}$, $i\in [r]$.
Let $\lambda_{\mathrm{OPT}}(A_i)$ denote the vector $(\lambda_{\mathrm{OPT}}(v_{i1}), \lambda_{\mathrm{OPT}}(v_{i2}), \ldots, \lambda_{\mathrm{OPT}}(v_{im_i}))$, for $i\in [r]$. Define $\lambda_{\mathrm{OPT}}$ as follows:
\begin{eqnarray}
\lambda_{\mathrm{OPT}}(A_1)&=&(0, 1, 2, \ldots, m_1-1)\nonumber\\
\lambda_{\mathrm{OPT}}(A_2)&=&(0, m_{(1)}, 2m_{(1)}, \ldots, (m_2-1)m_{(1)})\nonumber\\
&\vdots&\nonumber\\
\lambda_{\mathrm{OPT}}(A_{r-1})&=&(0, m_{(r-2)}, 2m_{(r-2)}, \ldots, (m_{r-1}-1)m_{(r-2)})\nonumber\\
\lambda_{\mathrm{OPT}}(A_r)&=&(1, m_{(r-1)}+1, 2m_{(r-1)}+1, \ldots, (m_r-1)m_{(r-1)}+1).
\end{eqnarray}

It is easy to see that $\omega_{\lambda_{\mathrm{OPT}}}(\mathcal V)=\sum_{i=1}^r\omega_{\lambda_{\mathrm{OPT}}}(A_i)=m_r+\sum_{q=1}^r\frac{m_{(q)}(m_q-1)}{2}$.

We now check that $\lambda_{\mathrm{OPT}}$ is an edge-discriminator. Define $m_{(0)}=1$. A hyperedge $e\in \mathcal E$ is of the form $(v_{1i_1}, v_{2i_2}, \ldots, v_{ri_r}\}$, where $1\leq i_k \leq m_k$ for $k\in [r]$. Therefore, a hyperedge in $\mathcal E$ is uniquely determined by the $r$-tuple $(i_1, i_2, \ldots, i_r)$, and we denote it by $e(i_1, i_2, \ldots, i_r)$. The weight of hyperedge $e(i_1, i_2, \ldots, i_r)$ is
$$\omega_{\lambda_{\mathrm{OPT}}}(e(i_1, i_2, \ldots, i_r))=(i_1-1)+(i_2-1)m_{(1)}+\ldots +(i_{r-1}-1)m_{(r-2)}+ (i_r-1)m_{(r-1)}+1.$$
Now, if possible, suppose that for two $r$-tuples $(i_1, i_2, \ldots, i_r)$ and $(j_1, j_2, \ldots, j_r)$, with $1\leq i_k, j_k \leq m_k $, we have $\omega_{\lambda_{\mathrm{OPT}}}(e(i_1, i_2, \ldots, i_r))=\omega_{\lambda_{\mathrm{OPT}}}(e(j_1, j_2, \ldots, j_r))$. Let $q\leq r$ be the largest index such that $i_q\ne j_q$. W.l.o.g. assume that $i_q > j_q$. Then $\omega_{\lambda_{\mathrm{OPT}}}(e(i_1, i_2, \ldots, i_r))=\omega_{\lambda_{\mathrm{OPT}}}(e(j_1, j_2, \ldots, j_r))$ implies that $$(i_q-j_q)m_{(q-1)}=\left|\sum_{k=1}^{q-1}(j_k-i_k)m_{(k-1)}\right|\leq \sum_{k=1}^{q-1}|j_k-i_k|m_{(k-1)}\leq \sum_{k=1}^{q-1}(m_k-1)m_{(k-1)}=m_{(q-1)}-1,$$
which is impossible.

This proves that $\lambda_{\mathrm{OPT}}$ is an edge discriminator for $\mathcal H_r(\pmb a)$, which attains the lower bound in Equation (\ref{eqn:partite}). Therefore, $\lambda_{\mathrm{OPT}}=\lambda_{\mathcal H_r(\pmb a)}$, and the proof is completed.\\

\begin{remark}For the special case where $m_1=m_2=\ldots=m_r=m$, the weight of the optimal edge-discriminator for a complete $r$-partite hypergraph is $\frac{m(m^r + 1)}{2}$, which matches the upper bound in Proposition \ref{th:r+1} up to a constant factor, like the complete $r$-uniform hypergraph.
\end{remark}

\section{Non-Attainable Optimal Weights: Proof of Theorem \ref{th:no_function}}
\label{sec:nonattainable}

In the previous sections we have shown that optimal edge-discriminator on a hypergraph $\mathcal H=(\mathcal V, \mathcal E)$ with $n$ hyperedges can have weight at most $n(n+1)/2$ and this is attained when the $n$ hyperedges in $\mathcal H$ are mutually disjoint. Moreover, there exists a hypergraph for which the weight of the optimal edge-discriminator is $n$, as demonstrated in Theorem \ref{th:lower_bound}. This raises the question that whether all weights between $n$ and $n(n+1)/2$ are attainable. More formally, we are interested to know that given any integer $w\in [n, n(n+1)/2]$, whether there exists a hypergraph $\mathcal H(n, w)$ on $n$ edges such that $w$ is the weight of the optimal edge-discriminator on $\mathcal H(n, w)$. We say that $w$ is {\it attainable} if there exists a hypergraph $\mathcal H$ on $n$ edges such that the weight of the optimal edge-discriminator on $\mathcal H$ is $w$. An integer $w \in [n, n(n+1)/2]$ is said to be {\it non-attainable} if it is not attainable. In this section we prove Theorem \ref{th:no_function} by showing that for all $n\geq 3$, the weight $n(n+1)/2-1$ is non-attainable.

\subsection{Proof of Theorem \ref{th:no_function}}

For $n=3$, the result can be proved easily by considering the different possible distinct hypergraphs on 3 edges.

Now, fix an integer $n \geq 4$. We prove the theorem by contradiction. Assume that there exists a hypergraph $\mathcal H_O=(\bbV, \bbE)$, with $|\mathcal V|=m$ and $|\mathcal E|=n$, such that $\omega_0(\mathcal H_O)=n(n+1)/2-1$. Fix any ordering $\nu$ on $\bbV$. From the proof of Theorem \ref{th:hypergraph_main} we get an edge discriminator $\lambda'$ for $\mathcal H_O$, such that $\omega_{\lambda'}(\mathcal V)\geq \frac{n(n+1)}{2}-1$. 


We now have the following observation:

\begin{obs}
For any fixed ordering $\nu$ on $\bbV$, either of the following statements must be true:
\begin{description}
\item[{\it (i)}] There exists $j\in[m]$ such that $\pi(\nu_{j})=2$ and $\pi(\nu_k)=1$ for all $k\in [m]\backslash\{j\}$.
\item[{\it (ii)}] $\pi(\nu_k)=1$ for all $k\in [m]$.
\end{description}
\label{ob:cases}
\end{obs}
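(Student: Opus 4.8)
The plan is to work under the proof's standing assumption $\omega_0(\mathcal H_O)=n(n+1)/2-1$ and to extract the profile of $\pi(\cdot)$ from two extremal facts. First I would pin down $\mathcal N(\mathcal H_O)$ exactly. Since $\omega_0(\mathcal H_O)<n(n+1)/2$, Theorem \ref{th:hypergraph_main} forces two hyperedges to intersect, so a single vertex hits both and $\mathcal N(\mathcal H_O)\le n-1$; on the other hand Corollary \ref{corollary:upperimproved} gives $n(n+1)/2-1=\omega_0(\mathcal H_O)\le n(n-1)/2+\mathcal N(\mathcal H_O)$, i.e. $\mathcal N(\mathcal H_O)\ge n-1$. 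Hence $\mathcal N(\mathcal H_O)=n-1$, and the hypothesis is precisely the statement that equality holds in Corollary \ref{corollary:upperimproved}.

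The engine of the argument will be Lemma \ref{lm:algogeneral} applied with a tailored seed. Fix the ordering $\nu$, set $S=\{v\in\bbV:\pi(v)\ge 2\}$, and run the Construction Algorithm with $\kappa=\mathbf{1}_S$. Then $\sum_k\pi(\nu_k)\mathbf{1}\{\kappa(\nu_k)>0\}=\sum_{v\in S}\pi(v)$ and $\sum_k\kappa(\nu_k)=|S|$, so the lemma delivers an edge-discriminator of weight at most $\frac{n(n+1)}{2}-\sum_{v\in S}(\pi(v)-1)$. Comparing with $\omega_0(\mathcal H_O)=\frac{n(n+1)}{2}-1$ yields $\sum_{v\in S}(\pi(v)-1)\le 1$; since every summand is at least $1$, this forces $|S|\le 1$, and if $S=\{v\}$ then $\pi(v)=2$. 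In words, at most one vertex can be the maximal vertex of two hyperedges and none can be maximal for three or more. This is the quantitative core of the observation and is where the general algorithm of Lemma \ref{lm:algogeneral} earns its keep.

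Next I would add the complementary counting. Each hyperedge has a unique maximal vertex, so $\sum_{k=1}^m\pi(\nu_k)=n$, and the set of maximal vertices $N(\nu)=\{v:\pi(v)\ge 1\}$ is a hitting set of $\mathcal H_O$, whence $|N(\nu)|\ge\mathcal N(\mathcal H_O)=n-1$. Since $\sum_{v\in N(\nu)}\pi(v)=n$ with each term $\ge 1$, this gives $|N(\nu)|\in\{n-1,n\}$: if $|N(\nu)|=n$ then all these values equal $1$, and if $|N(\nu)|=n-1$ then exactly one equals $2$ and the rest equal $1$, consistent with the previous paragraph. Restricted to $N(\nu)$, this is exactly the dichotomy \emph{(i)}/\emph{(ii)}.

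The hard part will be promoting ``on $N(\nu)$'' to ``on all of $[m]$'', i.e. ruling out vertices with $\pi(\nu_k)=0$, which is precisely the gap between the count above and the stated conclusion. Such vertices are invisible to the estimate of Lemma \ref{lm:algogeneral} (they do not change its weight bound), so they cannot be excluded by the weight count alone. My plan is a reduction: a never-maximal vertex that also differentiates no pair relative to $\nu$ receives label $0$ in the construction and can be deleted from its incident hyperedges without altering any edge-weight, so WLOG none exists; the genuinely delicate case is a never-maximal vertex that nonetheless serves as a differentiating vertex for some pair. I expect to eliminate this case by the same mechanism that ultimately drives Theorem \ref{th:no_function}, namely that any real overlap among the hyperedges already permits a saving of at least $2$ below $n(n+1)/2$, which would contradict $\omega_0(\mathcal H_O)=n(n+1)/2-1$. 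This final elimination is the one step I do not expect to be routine, and it is the true crux of the observation.
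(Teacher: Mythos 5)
Your second paragraph is the paper's proof, just packaged more efficiently: the paper runs Lemma \ref{lm:algogeneral} twice, once with $\kappa=\delta_{\nu_m}$ to rule out a vertex with $\pi\ge 3$ and once with $\kappa=\delta_{\nu_i}+\delta_{\nu_j}$ to rule out two vertices with $\pi=2$, whereas you take $\kappa=\mathbf 1_S$ with $S=\{v:\pi(v)\ge 2\}$ and read off $\sum_{v\in S}(\pi(v)-1)\le 1$ in one application. That single inequality already yields everything the paper proves here and everything it uses downstream: Lemma \ref{lm:less3}, the reduction of case \emph{(ii)} to case \emph{(i)}, and Lemma \ref{lm:n(n+1)/2-1_disjoint} all rely only on the facts that no vertex has $\pi\ge 3$ and at most one vertex has $\pi=2$. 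Your first and third paragraphs (that $\mathcal N(\mathcal H_O)=n-1$, and the count $\sum_{k}\pi(\nu_k)=n$ over the hitting set of maximal vertices) are correct but are additions the paper does not make or need.

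The ``crux'' you isolate in your last paragraph --- ruling out vertices with $\pi(\nu_k)=0$ --- is not a gap in your argument but an artifact of the statement being overstated. As literally written the Observation is false for a generic ordering: any vertex that is never the maximal vertex of a hyperedge has $\pi=0$, and since $\sum_{k}\pi(\nu_k)=n$ there must be such vertices whenever $m>n$. The version that is intended, proved, and used is ``$\pi(\nu_k)\le 1$ for all $k$ except possibly one index $j$ with $\pi(\nu_j)=2$''; the paper's own proof establishes exactly this and never addresses $\pi=0$ either. So you should not attempt to promote the conclusion from $N(\nu)$ to all of $[m]$ --- your paragraph on $\kappa=\mathbf 1_S$ is already a complete proof of the corrected statement, by essentially the same route as the paper.
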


\begin{proof} Suppose we have an ordering $\nu$ such that $\pi(\nu_{j})\ge3$ for some $j$. Without loss of generality, we assume that $j= |\mathcal V|=m$ because otherwise we can work with a new ordering $\nu'$ such that $ \nu_{j}=\nu'_{m}$.

Let $\nu_{m}\in  E_x\cap  E_y\cap  E_z$. We start with the function $\kappa(v)=\delta_{\nu_{m}}$ which takes the value $1$ at $\nu_{m}$ and $0$ everywhere else. From Lemma \ref{lm:algogeneral} we  
get an edge-discriminator $\lambda_{\kappa, \nu}$ such that $\sum_{k=1}^{m} \lambda_{\kappa, \nu}(\nu_{k})\leq n(n+1)/2-2$, which contradicts the hypothesis that the optimal edge discriminator has weight $n(n+1)/2-1$.

Now suppose there are two numbers $i$ and $j$ such that both $\pi(\nu_{i})$ and $\pi(\nu_{j})$ are  equal to $2$. Then applying the same argument as above and starting with the function  $\lambda=\delta_{\nu_{i}}+\delta_{\nu_{j}}$ and using Lemma \ref{lm:algogeneral} we get an edge-discriminator $\lambda_{\kappa, \nu}$ such that $\sum_{k=1}^{m} \lambda_{\kappa, \nu}(\nu_{k})\leq n(n+1)/2-2$, which gives us a contradiction. Hence the proof is complete. 
\end{proof}

Using the above observation we now formulate the following important lemma:

\begin{lem}
No vertex in $\mathcal V$ can be incident on more than to $2$ hyperedges in $\mathcal E$.
\label{lm:less3}
\end{lem}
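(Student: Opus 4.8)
The plan is to prove Lemma \ref{lm:less3} as a direct consequence of Observation \ref{ob:cases}, which does most of the heavy lifting. The statement to establish is that no vertex in $\mathcal V$ can be incident on more than $2$ hyperedges in $\mathcal E$, under the standing assumption (for contradiction) that $\mathcal H_O$ has an optimal edge-discriminator of weight exactly $n(n+1)/2 - 1$.

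First I would observe that incidence on hyperedges is an intrinsic property of the hypergraph and does not depend on the choice of ordering $\nu$. Specifically, for a vertex $v$, the number of hyperedges containing $v$ is fixed; what the ordering controls is only which of those edges have $v$ as their \emph{maximal} vertex, i.e. the quantity $\pi(v)$. The key connection is that if a vertex $v$ lies in $d$ hyperedges $E_{i_1}, \dots, E_{i_d}$, then I can always choose an ordering $\nu$ that places $v$ as the largest vertex overall (set $\nu_m = v$), and moreover arrange the remaining order so that $v$ is the maximal vertex of \emph{all} $d$ of these edges simultaneously. Under such an ordering, $\pi(v) = d$.

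The main step is then to invoke Observation \ref{ob:cases}. Suppose toward a contradiction that some vertex $v$ is incident on $d \geq 3$ hyperedges. Using the ordering constructed above with $\nu_m = v$, I obtain $\pi(\nu_m) = d \geq 3$. But Observation \ref{ob:cases} asserts that for \emph{any} ordering, the $\pi$-values are either all equal to $1$, or all equal to $1$ except for a single vertex where the value is $2$; in particular no $\pi(\nu_k)$ can be $3$ or larger. This directly contradicts $\pi(\nu_m) = d \geq 3$, and therefore every vertex lies in at most $2$ hyperedges.

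The only step requiring care is the claim that an ordering can be chosen making $v$ the maximal vertex of all $d$ edges containing it. This is elementary: since $v \in E_{i_\ell}$ for each $\ell$, placing $v$ last in the ordering (as $\nu_m$) guarantees that every vertex of every $E_{i_\ell}$ other than $v$ is less than $v$, so $\nu(E_{i_\ell}) = v$ for all $\ell$, giving $\pi(v) \geq d$. I expect this to be the only potential obstacle, and it is mild; the substantive content has already been extracted into Observation \ref{ob:cases}, whose proof exploits Lemma \ref{lm:algogeneral} to derive the weight-lowering contradictions. Thus the lemma follows immediately, and no further case analysis is needed.
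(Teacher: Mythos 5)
Your proof is correct and is essentially identical to the paper's: both place the offending vertex last in a new ordering so that it becomes the maximal vertex of all $d\geq 3$ edges containing it, forcing $\pi(\nu_m)\geq 3$ and contradicting Observation \ref{ob:cases}. No differences worth noting.
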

\begin{proof}If possible, suppose that there exists a vertex $v_o\in \mathcal V$ such that $v_o$ is incident on
$\ell~(\geq 3)$ hyperedges in $\mathcal E$. Define the ordering $\nu':[m]\rightarrow \bbV$, where $|\bbV|=m$, such that $\nu'_m:=\nu'(m)=v_o$. Therefore, $v_o$ must be the maximal vertex of all the $\ell$ hyperedges incident on it, that is, $\pi(\nu'_m)=\ell\geq 3$. This contradicts Observation \ref{ob:cases} and the proof of the lemma follows.

\end{proof}


Now, suppose that the second possibility in Observation \ref{ob:cases} holds for some ordering $\nu$, that is, $\pi(\nu_k)=1$ for all $k\in [m]$. Clearly, $\mathcal H_O$ cannot be the hypergraph in which all the $n$ are hyperedges disjoint. Therefore, we may assume that at least a pair of hyperedges intersect. Now, similar to the proof of Lemma \ref{lm:less3}, we can define a new order $\nu'$ on the vertices such that $\nu'_m:=\nu'(m)\geq 2$, and the problem reduces to the first possibility of Observation \ref{ob:cases} with respect to the ordering $\nu'$.

Therefore, it suffices to consider the first possibility in Observation \ref{ob:cases}, that is, there exists $j\in[m]$ such that $\pi(\nu_{j})=2$ and $\pi(\nu_k)=1$ for all $k\in [m]$ and  $k\ne j$. Let $F$ and $G$ be the two hyperedges having $\nu(i_j)$ as the maximal vertex. We now have the following lemma:

\begin{lem}
For any two hyperedges $A, B\in\bbE\backslash\{F, G\}$ $A\cap B=\emptyset$.
\label{lm:n(n+1)/2-1_disjoint}
\end{lem}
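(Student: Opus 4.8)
The plan is to prove the contrapositive in spirit: I will show that if two hyperedges $A, B \in \mathcal E \backslash \{F, G\}$ intersect, then one can construct an edge-discriminator of weight strictly less than $n(n+1)/2 - 1$, contradicting the assumption that $\omega_0(\mathcal H_O) = n(n+1)/2 - 1$. The key tool is Lemma \ref{lm:algogeneral}, which lets me start the Construction Algorithm from a carefully chosen initial function $\kappa$ and bounds the resulting weight by $\frac{n(n+1)}{2} - \sum_{k=1}^m \pi(\nu_k)\pmb 1\{\kappa(\nu_k) > 0\} + \sum_{k=1}^m \kappa(\nu_k)$. The strategy is to find an ordering and an initial $\kappa$ that places positive initial weight on vertices that are maximal vertices of some edges, so that the savings term $\sum_k \pi(\nu_k)\pmb 1\{\kappa(\nu_k) > 0\}$ exceeds the cost term $\sum_k \kappa(\nu_k)$ by at least $2$.

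First I would suppose, for contradiction, that there exist $A, B \in \mathcal E \backslash \{F, G\}$ with $A \cap B \neq \emptyset$; pick a common vertex $w \in A \cap B$. We are already in the setting where $\pi(\nu_j) = 2$ (with $F, G$ being the two edges maximal at $\nu_j$) and $\pi(\nu_k) = 1$ for all other $k$. The difficulty is that $F$ and $G$ already share the vertex $\nu_j$ as their common maximal vertex, so I must be careful to preserve that structure while forcing a second coincidence among the remaining edges. The natural move is to re-order the vertices so that $w$ is pushed to be the maximal vertex of \emph{both} $A$ and $B$ — that is, choose an ordering $\nu'$ placing $w$ last among all vertices of $A \cup B$ — so that with respect to $\nu'$ the vertex $w$ has $\pi_{\nu'}(w) \geq 2$. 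If this can be done while keeping $\nu_j$ (the maximal vertex of $F, G$) also of multiplicity $2$, then under $\nu'$ there would be \emph{two} distinct vertices each serving as maximal vertex of two edges, contradicting Observation \ref{ob:cases} directly — which already forbids two vertices of $\pi$-value $2$.

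The main obstacle is that re-ordering to make $w$ maximal for both $A$ and $B$ may inadvertently change which vertex is maximal for $F$ and $G$, or may cause $w$ to coincide with $\nu_j$, collapsing the two coincidences into one. I would handle this by case analysis on whether $w \in F \cup G$. In the generic case $w \notin F \cup G$, I can build $\nu'$ fixing $\nu_j$ as the global maximum is not required; instead I only need $w$ to be maximal within $A \cup B$ and $\nu_j$ maximal within $F \cup G$, and since these can be arranged simultaneously whenever $w \neq \nu_j$, Observation \ref{ob:cases} is violated because now both $w$ and $\nu_j$ have $\pi$-value at least $2$. In the degenerate case where every choice forces $w$ into the edges $F$ or $G$, I would instead invoke Lemma \ref{lm:algogeneral} directly with $\kappa = \delta_{\nu_j} + \delta_{w}$ (putting initial weight $1$ on each), yielding a saving of $\pi(\nu_j) + \pi(w) \geq 2 + 2 = 4$ against a cost of $2$, hence an edge-discriminator of weight at most $\frac{n(n+1)}{2} - 2$, again contradicting optimality at $\frac{n(n+1)}{2} - 1$.

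I expect the bookkeeping around the interaction between the pair $\{F, G\}$ and the pair $\{A, B\}$ to be the genuinely delicate part: one must verify that the two intersection events are \emph{independent} enough to coexist in a single ordering, and that the algorithm's saving is genuinely additive. Once the ordering producing two $\pi$-value-$2$ vertices is constructed, the contradiction with Observation \ref{ob:cases} is immediate, so the whole argument reduces to this combinatorial reordering lemma plus a routine application of Lemma \ref{lm:algogeneral}.
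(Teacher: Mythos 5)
Your proposal is correct and follows essentially the same route as the paper: reorder the vertices so that the common vertex $w$ of $A$ and $B$ and the vertex $\nu_j$ are the two largest, producing two distinct vertices of $\pi$-value at least $2$ and contradicting Observation \ref{ob:cases}. The only real difference is that your ``degenerate case'' $w\in F\cup G$ is actually vacuous --- Lemma \ref{lm:less3} already forces $w\notin F\cup G$ (and $\nu_j\notin A\cup B$) because $w$ already lies in the two edges $A$ and $B$ --- which is exactly how the paper dispenses with the interaction bookkeeping you worry about (and note that in your fallback argument the claim $\pi(\nu_j)\geq 2$ could fail once $w$ displaces $\nu_j$ as the maximal vertex of $F$ or $G$, though the total saving would still suffice).
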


\begin{proof}
Suppose there exists $v_o\in \mathcal V$ such that $v_o\in A\cap B$. Now, from Lemma \ref{lm:less3}, $v_o \notin F \cup G$. Similarly, as $\nu_{j}\in F \cap G$ we have $\nu_{j}\notin A \cup B$.

Let us consider a new ordering
$\nu'$ on $\bbV$ such that $\nu'_{m-1}:=\nu'(m-1)=\nu_{j}$ and $\nu'_m:=\nu'(m)=v_o$.

Therefore, $\nu'(A)=\nu'(B)=v_o$, and so $\pi(\nu'_m)\geq 2$. Moreover, as $v_o\notin F\cup G$ and $\nu_{j}\in F\cap G$, $\nu'(F)=\nu'(G)=\nu_{j}$. This means that $\pi(\nu'_{m-1})\geq 2$.

This contradicts Observation \ref{ob:cases} and the result follows. 
\end{proof}

The above lemma helps us to deduce a necessary configuration of the hypergraph $\mathcal H_O$. This can be visualized by Figure \ref{fig:structure_diagram} and is summarized in the following lemma, the proof of which is immediate from Lemma \ref{lm:less3} and Lemma \ref{lm:n(n+1)/2-1_disjoint}.

\begin{lem}The set of hyperedges in $\mathcal E\backslash \{F, G\}$ can be partitioned into two disjoint sets $\mathcal F_A=\{A_1, A_2, \ldots, A_s\}$ and $\mathcal F_B=\{B_1, B_2, \ldots, B_t\}$, with $s+t=n-2$, such that:
\begin{description}
\item[{\it (i)}]$A_i\cap A_j=\emptyset$ and $A_i\cap (F \cup G)=\emptyset$ for distinct indices $i, j\in [s]$,
\item[{\it (ii)}]$B_i\cap B_j=\emptyset$, $B_i\cap (F \cup G)\ne \emptyset$, and $B_i\cap (F \cap G)=\emptyset$ for distinct indices $i, j\in [t]$. 
\end{description}
\label{lm:n(n+1)/2-1_structure}
\end{lem}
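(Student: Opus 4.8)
The plan is to produce the partition by the simplest possible device---sorting the edges of $\mathcal E\setminus\{F,G\}$ according to whether or not they meet $F\cup G$---and then to read off each required property directly from the two preceding lemmas, rather than constructing anything new. Concretely, I would define
\[
\mathcal F_A=\{E\in\mathcal E\setminus\{F,G\}: E\cap(F\cup G)=\emptyset\},\qquad \mathcal F_B=\{E\in\mathcal E\setminus\{F,G\}: E\cap(F\cup G)\ne\emptyset\}.
\]
These two families are by construction disjoint and together exhaust $\mathcal E\setminus\{F,G\}$, so writing $\mathcal F_A=\{A_1,\dots,A_s\}$ and $\mathcal F_B=\{B_1,\dots,B_t\}$ forces $s+t=n-2$, giving the asserted cardinality relation for free.

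The first step is to clear away the pairwise-disjointness clauses. Since every edge of $\mathcal F_A$ and every edge of $\mathcal F_B$ lies in $\mathcal E\setminus\{F,G\}$, Lemma \ref{lm:n(n+1)/2-1_disjoint} immediately yields $A_i\cap A_j=\emptyset$ and $B_i\cap B_j=\emptyset$ for distinct indices, without the partition even being invoked. The conditions $A_i\cap(F\cup G)=\emptyset$ in part (i) and $B_i\cap(F\cup G)\ne\emptyset$ in part (ii) then hold tautologically, by the defining membership criteria of $\mathcal F_A$ and $\mathcal F_B$.

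The only clause that calls for an actual argument is $B_i\cap(F\cap G)=\emptyset$ in part (ii), and here I would appeal to the degree cap of Lemma \ref{lm:less3}. Any vertex $w\in F\cap G$ is incident on both $F$ and $G$, hence on at least two hyperedges; by Lemma \ref{lm:less3} it is incident on \emph{exactly} two, namely $F$ and $G$ themselves, so it cannot belong to the distinct edge $B_i$. Hence no vertex of $F\cap G$ lies in $B_i$, which is precisely $B_i\cap(F\cap G)=\emptyset$, and the verification is complete.

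I expect no genuine obstacle, since the substantive combinatorial content has already been extracted into Lemmas \ref{lm:less3} and \ref{lm:n(n+1)/2-1_disjoint}, and the present statement is essentially a repackaging of them. The one point deserving care is exactly the $F\cap G$ clause: it relies on the degree bound being $2$ rather than merely on the disjointness of the edges outside $\{F,G\}$. It is the degree cap, not disjointness, that prevents a third edge from reaching into the shared part of $F$ and $G$, and keeping that distinction explicit is the only place where the argument is not purely bookkeeping.
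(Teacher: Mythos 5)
Your proof is correct and takes the same route the paper intends: the paper gives no explicit argument, stating only that the lemma ``is immediate from Lemma \ref{lm:less3} and Lemma \ref{lm:n(n+1)/2-1_disjoint},'' and your write-up simply fills in those details, with the partition by intersection with $F\cup G$ and the degree-cap argument for $B_i\cap(F\cap G)=\emptyset$ being exactly the intended reading.
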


Using properties of this special configuration, we now construct an edge-discriminator $\lambda_1$  on $\mathcal H_O$, such that, $\omega_{\lambda_1}(\mathcal H_O)< n(n+1)/2-1$.

\begin{figure*}[h]
\centering
\begin{minipage}[c]{1.0\textwidth}
\centering
\includegraphics[width=5.25in]
    {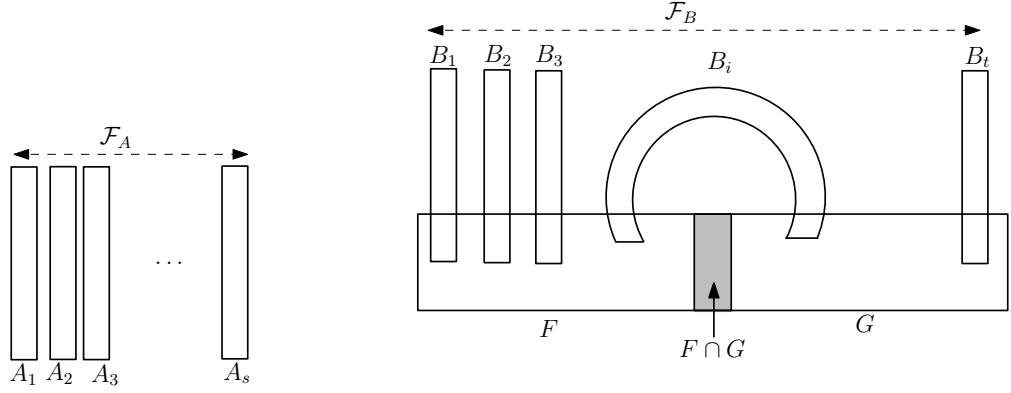}\\
\end{minipage}%
\caption{Illustration for the proof of Theorem \ref{th:no_function}.}
\label{fig:structure_diagram}
\end{figure*}

\begin{description}
\item[{\it Case} 1:]$s=n-2$. Then $|\mathcal E_B|=0$ and $F$ and $G$ are the only two
intersecting hyperedges in $\mathcal H_O$. We define $\lambda_1:\mathcal V\rightarrow \mathbb Z^{+}\cup \{0\}$ as follows:
$$\begin{array}{ll}
    \lambda_1(\nu(A_i))=i, & \hbox{for}~i\in [s]; \\
    \lambda_1(\nu(F\cap G))=n-1; &  \\
    \lambda_1(\nu(F\Delta G))=1; & \\
    \lambda_1(x)=0, & \hbox{otherwise.}
  \end{array}$$
It is clear that $\lambda_1$ is an edge-discriminator on $\mathcal H_O$, and
\begin{eqnarray}
\omega_{\lambda_1}(\mathcal H_O)=\sum_{v\in \mathcal V}{\lambda_1(v)}&=&\sum_{i=1}^{n-2}\omega_{\lambda_1}(A_i)+
\omega_{\lambda_1}(F)+\omega_{\lambda_1}(G)-\omega_{\lambda_1}(F\Delta G)\nonumber\\
&=& \frac{n(n-1)}{2}-1<\frac{n(n+1)}{2}-1. \nonumber
\label{eqn:n(n+1)/2-1_lambda_o_1}
\end{eqnarray}

\item[{\it Case} 2:]$s<n-2$. In this case there are $n-s-2$ hyperedges that intersect with $F \Delta G$. We define $\lambda'_1:\mathcal V\rightarrow \mathbb Z^{+}\cup \{0\}$ as follows:
$$\begin{array}{ll}
    \lambda'_1(\nu(A_i))=i, & \hbox{for}~i\in [s]; \\
    \lambda'_1(\nu(B_i\cap(F\Delta G)))=i+s; & \hbox{for}~ i\in [t-1];  \\
    \lambda'_1(x)=0, & \hbox{otherwise.}
  \end{array}$$
Now, we look at $$q=\left |\sum_{v\in F}{\lambda'_1(v)}-\sum_{v\in G}{\lambda'_1(v)}\right |.$$
If $q=n-1$ then we define $\lambda_1: \mathcal V\rightarrow \mathbb Z^{+}\cup \{0\}$ as
$$\begin{array}{ll}
    \lambda_1(\nu(A_i))=i, & \hbox{for}~i\in [s]; \\
    \lambda_1(\nu(B_i\cap(F\Delta G)))=i+s, & \hbox{for}~ i\in [t-1];  \\
    \lambda_1(\nu(B_t\cap(F \Delta G)))=n-2; & \\
    \lambda_1(\nu(F \cap G))=n-1; & \\
   \lambda_1(x)=0, & \hbox{otherwise.}
  \end{array}$$
If $q\ne n-1$ then we define
$$\begin{array}{ll}
    \lambda_1(\nu(A_i))=i, & \hbox{for}~i\in [s]; \\
    \lambda_1(\nu(B_i\cap(F\Delta G)))=i+s, & \hbox{for}~ i\in [t-1];  \\
    \lambda_1(\nu(B_t\cap(F \Delta G)))=n-1; & \\
    \lambda_1(\nu(F \cap G))=n; & \\
   \lambda_1(x)=0, & \hbox{otherwise.}
  \end{array}$$
It is again easy to see that $\lambda_1$ is an edge-discriminator on $\mathcal H_O$ and as $n>3$,
$$\sum_{v\in \mathcal V}{\lambda_1(v)}\leq\frac{n(n-1)}{2}+2<\frac{n(n+1)}{2}-1.$$
\end{description}

Therefore, $\lambda_1$ is an edge-discriminator on $\mathcal H_O$, such that, $\omega_{\lambda_1}(\mathcal H_O)< n(n+1)/2-1$. This contradicts our assumption that $\omega_0(\mathcal H_O)=n(n+1)/2-1$ and the proof of Theorem \ref{th:no_function} follows.

\section{Geometric Set Discrimination and Potential Applications}
\label{sec:application}

In this section we show how hypergraph edge-discriminators can be used to differentiate a collection of regions in $\mathbb R^d$. Consider a finite collection of regions $\mathcal R=\{R_1, R_2, \ldots, R_n\}$ in $\mathbb R^d$, where a {\it region} is a subset of $\mathbb R^d$. Given any $n$-tuple $(\epsilon_1, \epsilon_2, \ldots, \epsilon_n)\in \{0, 1\}^n$, define $\mathcal R(\epsilon_1, \epsilon_2, \ldots, \epsilon_n)=\bigcap_{i=1}^n R_i^{\epsilon_i}$, where $R_i^0=R_i$ and $R_i^1=\mathbb R^d\backslash R_i$, for $i\in [n]$. Also for $i \in [n]$, define $E_i=\bigcup_{(\epsilon_1, \epsilon_2, \ldots, \epsilon_n)\in A_i}\mathcal R(\epsilon_1, \epsilon_2, \ldots, \epsilon_n)$, where $A_i=\{(\epsilon_1, \epsilon_2, \ldots, \epsilon_n)\in \{0, 1\}^n: \epsilon_i=1\}$. The {\it geometric hypergraph} generated by $\mathcal R$, to be denoted by $\mathcal H(\mathcal R)$, is the hypergraph $(\bbV_{\mathcal R}, \bbE_\mathcal R)$, where $\bbV_{\mathcal R}=\{\mathcal R(\epsilon_1, \epsilon_2, \ldots, \epsilon_n): (\epsilon_1, \epsilon_2, \ldots, \epsilon_n)\in \{0, 1\}^n\}$ and $\mathcal E_\mathcal R=\{E_1, E_2, \ldots, E_n\}$.


An edge-discriminator for the geometric hypergraph $\mathcal H(\mathcal R)$ is a finite set $M\subset \mathbb R^d$ such that $|R_i\cap M|> 0$, for $i\in [n]$, and $|R_i\cap M|\ne |R_j\cap M|$, for all $i\ne j \in [n]$. The set $M$ is called the {\it geometric discriminator} for $\mathcal R$. The optimal edge-discriminator on $\mathcal H(\mathcal R)$ is the geometric discriminator of the least cardinality, and will be called the {\it optimal geometric discriminator} of $\mathcal R$. The problem of finding the optimal geometric discriminator for a geometric hypergraph, generated by a finite collection of regions in $\mathbb R^d$, will be called the {\it Geometric Set Discrimination Problem}.

Geometric set discrimination poses many interesting computational geometry problems. These include devising efficient algorithms or proving hardness results, particularly when the regions consist of intervals in $\mathbb R^1$, or rectangles or circles in $\mathbb R^2$. These algorithmic questions are left for future research. However, we shall discuss three simple examples of geometric set discrimination which will provide instructive insights into the properties of edge-discriminators and corroborate some of our earlier results.

\begin{figure*}[h]
\centering
\begin{minipage}[c]{0.33\textwidth}
\centering
\includegraphics[width=0.75in]
    {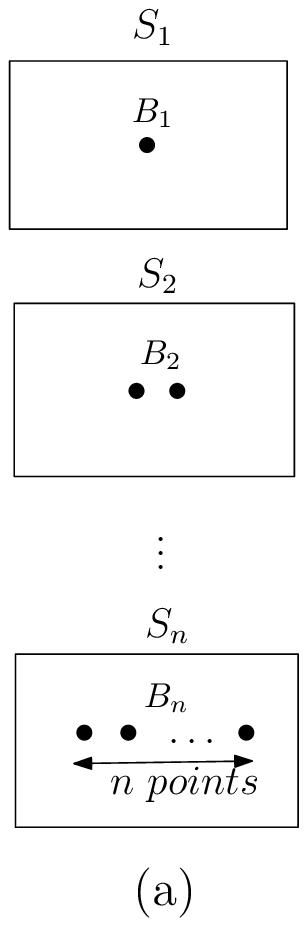}\\
\end{minipage}%
\begin{minipage}[c]{0.33\textwidth}
\centering
\includegraphics[width=1.95in]
    {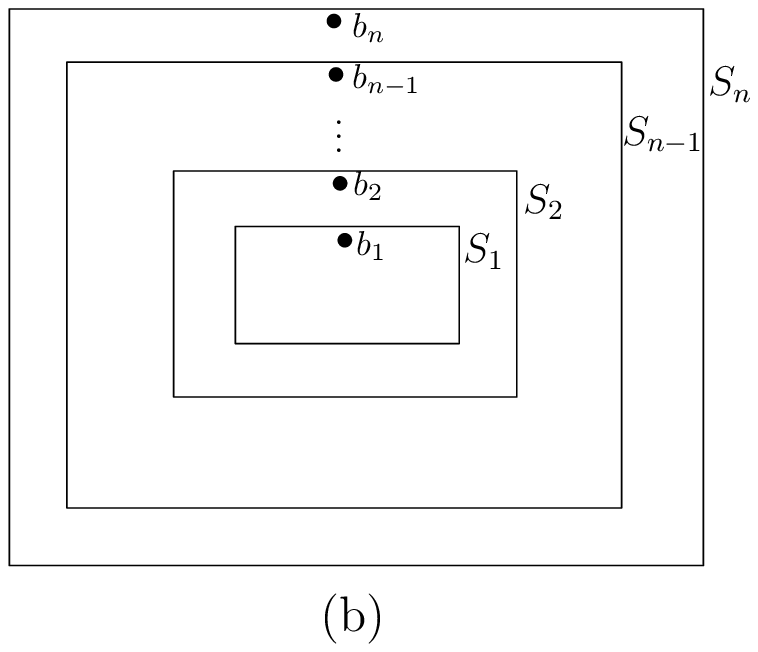}\\
\end{minipage}
\begin{minipage}[c]{0.33\textwidth}
\centering
\includegraphics[width=1.45in]
    {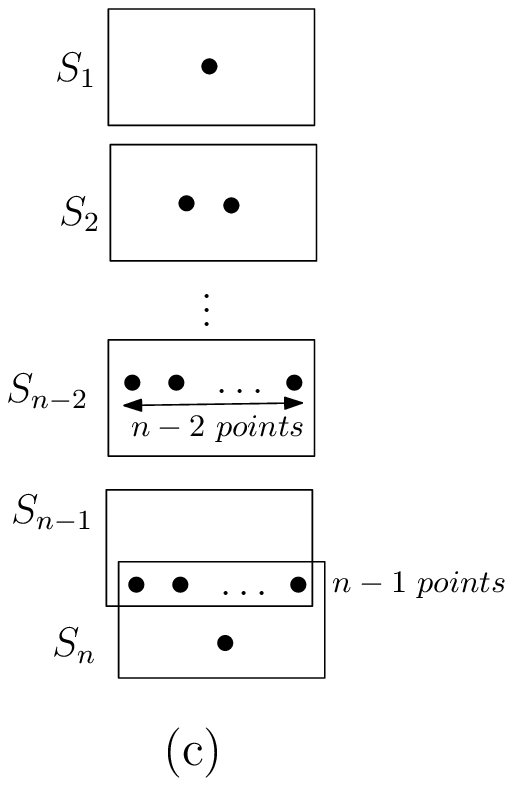}\\
\end{minipage}
\caption{Examples of geometric set discrimination}
\label{fig1}
\end{figure*}

\begin{description}
\item[{\it Example} 1:] We have shown that the upper bound on the weight of an edge-discriminator proved in Theorem \ref{th:hypergraph_main} is attained if and only if the hypergraph has $n$ disjoint edges. The geometric hypergraph  generated by the set $\mathcal S=\{S_1, S_2, \ldots, S_n\}$ of mutually disjoint axis-aligned squares (Figure \ref{fig1}(a)) 
is such an example. Let $B_i\subset \mathbb R^2$ be any set of distinct $i$ points in the interior of $S_i$. It is trivial to see that the set $B=\{B_1, B_2, \ldots, B_n\}$ is the optimum geometric discriminator of $\mathcal S$ and $\omega_0(\mathcal H_\mathcal S)=\frac{n(n+1)}{2}$. 

\item[{\it Example} 2:]Consider the set $\mathcal S=\{S_1, S_2, \ldots, S_n\}$ of axis-aligned squares such that $S_i\subset S_{i+1}$ for all $i\geq 1$. Let $B=\{b_1, b_2, \ldots, b_n\}$, where $b_i \in S_i\backslash \bigcup_{j=1}^{i-1}S_j$ (Figure \ref{fig1}(b)). Clearly, $B$ is the optimum edge-discriminator of the geometric hypergraph $\mathcal H_{\mathcal S}$. Since $|B|=n$, the optimum-weight of the edge-discriminator of $\mathcal H_\mathcal S$ attains the lower bound in Theorem \ref{th:lower_bound}. 

\item[{\it Example} 3:]Consider the set $\mathcal S=\{S_1, S_2, \ldots, S_n\}$ of axis-aligned squares, such that $S_i \cap S_j=\emptyset$ for all $i \ne j \in [n-1]$, and $S_i\cap S_n=\emptyset$ for all $i\in [n-2]$, and $S_{n-1}\cap S_n\ne \emptyset$ (see Figure \ref{fig1}(c)). Let $B_i$ be any set of $i$ points in the interior of $S_i$, for $i\in [n-2]$. Let $B_{n-1}$ be any set of $n-1$ points in the interior of $S_{n-1}\cap S_n$ and $b_n$ is any point in the interior of $S_n\backslash S_{n-1}$. It is easy to see that set $B=\{B_1, B_2, \ldots, B_{n-1}, b_n\}$ is the optimum geometric discriminator of $\mathcal S$, with $\omega_0(\mathcal H_\mathcal S)=n(n-1)/2+1$. Note that in this example the $n$ hyperedges are almost disjoint, but the weight of the optimal edge-discriminator is $\frac{n(n-1)}{2}+1=\frac{n(n+1)}{2}-(n-1)$. In fact, this example leads us to conjecture that all integer values in $\mathcal N:=\left[\frac{n(n-1)}{2}+2, \frac{n(n+1)}{2}-1\right]$ are non-attainable.  In Theorem \ref{th:no_function} we only show that $\frac{n(n+1)}{2}-1$ is non-attainable. It might be possible to generalize the proof of Theorem \ref{th:no_function} to prove that weights like $n(n+1)/2-a$ are non-attainable, for small constant values of $a$. However, proving it for all integer values in $\mathcal N$, that is, for all $a\in [n-2]$ appears to be challenge.
\end{description}

Geometric set discrimination problems have potential applications in unique image indexing in large database \cite{euler_smc,image_indexing}, where the emphasis is specially given on deciding whether a particular image
exists in the database, rather than on finding the similarity matches of the given image. A novel method for image indexing using only the number of connected components, the number of holes, and the Euler number of an image was proposed by Biswas et al. \cite{image_indexing}. A {\it connected component} of a digital binary image is a subset of maximal size such that any two of its pixels can be joined by a connected curve, in 8-connectivity, lying entirely in the subset. A {\it hole} in a digital image is a region of the background, which is a connected component in 4-connectivity and is completely enclosed by the object. The {\it Euler number} of an image is defined as the number of connected components minus the number of holes in the image. If $C$ and $H$ denote the number of connected components and the number of holes in a digital image, respectively, then its Euler number $E=C-H$ \cite{euler_smc,gonzalez_woods,pratt}. The ordered pair $(C, H)$ is called the {\it Euler pair} of a digital image. It is apparent that two or more images may have the same value of the Euler pair, and hence this feature alone often cannot uniquely characterize an image in a large database. One way to disambiguate the features is to deploy a mask image \cite{image_indexing} as follows: We assume that each image is given as a $(k_1 \times k_2)$ binary pixel matrix. Let us consider $n$ images $I_1, I_2, \ldots, I_n$, each having the same Euler pair. In order to discriminate them, another binary image $M$ called mask is to be constructed such that the Euler pair of the $n$ images $I_1\odot M, I_2\odot M, \ldots, I_n \odot M$ are mutually distinct, where $\odot$ denotes bitwise Boolean operation, like XOR or AND between the corresponding bits of the two pixel matrices.

\begin{figure*}[h]
\centering
\begin{minipage}[c]{1.0\textwidth}
\centering
\includegraphics[width=4.5in]
    {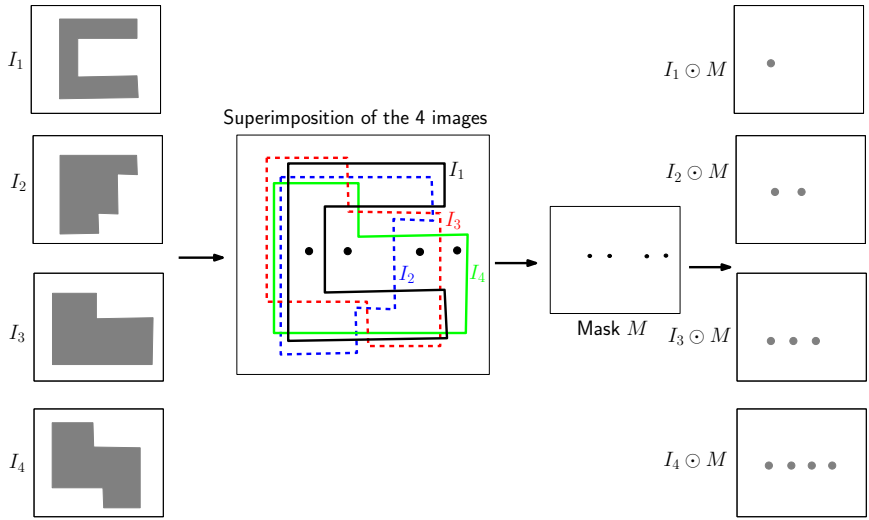}\\
\end{minipage}%
\caption{Unique image-indexing by geometric-set discrimination.}
\label{fig:image_indexing}
\end{figure*}

As it turns out, finding a simple mask of a given set of images is a challenging problem. Biswas et al. \cite{image_indexing} provided an iterative heuristic based on few synthetic pseudo-random masks. We now show that finding a mask for a set of images can be modeled as an instance of the hypergraph edge-discrimination for a collection of geometric regions in $\mathbb R^2$. Consider the images $I_1, I_2, \ldots, I_n$, superimposed on each other in the same frame, as subsets of $\mathbb R^2$. Suppose $M$ is a geometric discriminator for this collection of regions. Then the binary image corresponding to $M$ is a mask for the set of images under the bitwise Boolean AND operation. The process is illustrated with 4 binary images in the Figure \ref{fig:image_indexing}.\footnote{As binary images are actually subsets of the discrete space $\mathbb Z^2$, the mask $M$ should be a subset of $\mathbb Z^2$. As a result, several technical difficulties may arise while trying to obtain a geometric discriminator for a set of binary images containing holes. These problems need to be handled separately and they are not of interest to this paper. Here we use the unique indexing problem just as a motivation for the edge-discrimination problem on hypergraphs.} The mask corresponding to the optimal geometric discriminator is the simplest in the sense that the pixel matrix has the least number of ones.

\section{Conclusions}
\label{sec:conclusions}

In this paper we introduce the notion of hypergraph edge-discrimination and study its properties. We have shown that given any hypergraph $\mathcal H=(\mathcal V, \mathcal E)$, with $|\mathcal V|=m$ and $|\mathcal E|=n$, $\omega_0(\mathcal H)\leq n(n+1)/2$, and the equality holds if and only if the elements of $\mathcal E$ are mutually disjoint. For $r$-uniform hypergraphs, using properties of $B_h$-sets, we prove that $\omega_0(\mathcal H)\leq m^{r+1}+o(m^{r+1})$, and the bound is attained by a complete $r$-uniform hypergraph up to a constant factor.

Moreover, it is easy to see that for any hypergraph $\mathcal H=(\mathcal V, \mathcal E)$ and any edge-discriminator $\lambda$ on $\mathcal H$, $\omega_\lambda(\mathcal V)\geq \max\{n, \delta(\delta+1)/2\}$, where $|\mathcal E|=n$ and $\delta$ is the size of the maximum matching in $\mathcal H$. This motivated us to consider the question of attainability of weights: Given any integer $w\in [n, n(n+1)/2]$, we ask whether there exists a hypergraph $\mathcal H(n, w)$ with $n$ hyperedges such that the weight of the optimal edge-discriminator on $\mathcal H(n, w)$ is $w$. We answer this question in the negative by proving that there exists no hypergraph on $n~(\geq 3)$ hyperedges such that the weight of the optimal edge-discriminator is $n(n+1)/2-1$. The problem of attainability of weights appears to be a very interesting problem which might lead to surprising consequences. 


Computing optimal edge-discriminators for special hypergraphs are also interesting combinatorial problems. We have computed the optimal edge-discriminators for paths, cycles, and the complete $r$-partite hypergraph. Finding optimal edge-discriminators appear to be quite difficult even for very simple graphs, in particular if the graph is not regular. Interesting graphs that might be considered for future research are the wheel and the hypercube.

Finally, as mentioned in the previous section, one of the major problems for future research is the algorithmic study of the geometric set-discrimination problem.

\end{document}